\begin{document}
\title{\textbf{Compactness of certain class of singular minimal hypersurfaces}}

\author{Akashdeep Dey \thanks{Email: adey@math.princeton.edu, dey.akash01@gmail.com}}
\date{}
\maketitle

\newtheorem{theorem}{Theorem}[section]
\newtheorem{lemma}[theorem]{Lemma}
\newtheorem{claim}[theorem]{Claim}
\newtheorem{proposition}[theorem]{Proposition}

\theoremstyle{definition}
\newtheorem{definition}[theorem]{Definition}
\newtheorem{example}[theorem]{Example}
\newtheorem{xca}[theorem]{Exercise}

\theoremstyle{remark}
\newtheorem{remark}[theorem]{Remark}

\numberwithin{equation}{section}

\newcommand{\mf}{manifold\;}
\newcommand{\mfs}{manifolds\;}
\newcommand{\mt}{metric\;}
\newcommand{\st}{such that\;}
\newcommand{\Th}{Theorem\;}
\newcommand{\te}{there exists\;}
\newcommand{\tf}{Therefore, \;}
\newcommand{\wrt}{with respect to\;}
\newcommand{\bbr}{\mathbb{R}}
\newcommand{\bbn}{\mathbb{N}}
\newcommand{\mres}{\mathbin{\vrule height 1.6ex depth 0pt width
		0.13ex\vrule height 0.13ex depth 0pt width 1.3ex}}

\begin{abstract}
	Given a closed Riemannian \mf $(N^{n+1},g)$, $n+1 \geq 3$ we prove the compactness of the space of singular, minimal hypersurfaces in $N$ whose volumes are uniformly bounded from above and the $p$-th Jacobi eigenvalue $\lambda_p$'s are uniformly bounded from below. This generalizes the results of Sharp \cite{Sharp} and  Ambrozio-Carlotto-Sharp \cite{ACS} in higher dimensions. 
\end{abstract}

\section{Introduction}

A hypersurface of a Riemannian \mf $(N^{n+1},g)$ is called \textit{minimal} if it is a critical point of the $n$-dimensional area functional. By the combined works of Almgren \cite{Alm}, Pitts \cite{Pitts} and Schoen-Simon \cite{SS} one gets the following Theorem.
\\
\begin{theorem} [\cite{Alm}, \cite{Pitts}, \cite{SS}]
	Let $(N^{n+1},g)$ be an arbitrary closed Riemannian manifold with $n+1 \geq 3$. Then $N$ contains a singular, minimal hypersurface which is smooth and embedded outside a singular set of Hausdorff dimension atmost $n-7$. In particular, when $3 \leq n+1 \leq 7$ \te a smooth, closed, embedded, minimal hypersurface in $N$. 
	\label{Almgren-Pitts min-max theorem}
\end{theorem}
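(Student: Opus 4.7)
The plan is to combine the min-max construction of Almgren--Pitts with the interior regularity theory of Schoen--Simon. First, I would set up the geometric measure theoretic framework on the space $\mathcal{Z}_n(N;\mathbb{Z}_2)$ of mod-$2$ flat $n$-cycles in $N$, equipped with the mass and flat topologies. Almgren's isomorphism identifies $\pi_{k+1}(\mathcal{Z}_n(N;\mathbb{Z}_2),\{0\})$ with $H_{n+k+1}(N;\mathbb{Z}_2)$; taking $k=0$ and the fundamental class provides a nontrivial homotopy class $\Pi$, and one defines the width
\[
W(N,g) = \inf_{\Phi \in \Pi} \sup_{x \in \mathrm{dom}(\Phi)} \mathbf{M}(\Phi(x)) > 0.
\]
The positivity of $W$ uses an isoperimetric-type lower bound for sweepouts.

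Next, I would invoke Pitts' discrete-to-continuous machinery. One passes to discrete sweepouts in the sense of Pitts, extracts a min-max sequence whose maximal masses converge to $W$, and applies the pull-tight deformation to ensure that every varifold limit of a maximizing subsequence is stationary. Write $V$ for such a limit; by construction $V$ is a stationary integral $n$-varifold with $\|V\|(N)=W$. The decisive additional property, whose proof constitutes the bulk of Pitts' monograph, is that $V$ is \emph{almost minimizing in annuli}: for every $p \in \operatorname{spt}\|V\|$ there is a positive radius such that on sufficiently small concentric annuli around $p$, $V$ cannot be efficiently deformed to lower mass through a continuous family of cycles. This is the substitute for stability that interior regularity demands in the min-max context.

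Finally, I would apply the Schoen--Simon regularity theorem: a stationary integral $n$-varifold in an $(n+1)$-dimensional Riemannian \mf that is almost minimizing in annuli (equivalently, satisfies the appropriate replacement property) is supported on an embedded $C^{\infty}$ minimal hypersurface $\Sigma$ away from a closed singular set $\mathrm{sing}(\Sigma)$ of Hausdorff dimension at most $n-7$. Because $n-7<0$ whenever $n+1\leq 7$, the singular set must be empty in low dimensions, yielding the second statement.

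The technical heart of the argument, and by far the main obstacle, is Pitts' combinatorial proof that the min-max sequence can be arranged to be almost minimizing in annuli at every point of the support of the limit varifold; this requires the discrete interpolation theory, the comparison arguments between continuous and discrete sweepouts, and a delicate pigeonhole on annular scales. Once this is in hand, the Schoen--Simon regularity theorem is invoked as a black box to pass from stationarity plus the replacement property to smoothness outside the codimension-$7$ singular set.
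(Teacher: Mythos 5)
The paper does not supply its own proof of this theorem; it is cited directly to Almgren, Pitts, and Schoen--Simon, and your sketch reproduces the standard architecture of those works accurately: Almgren's isomorphism to produce a nontrivial sweepout class, positivity of the width, Pitts' discrete-to-continuous min-max with pull-tight and the almost-minimizing-in-annuli property, and finally regularity.

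One small imprecision is worth correcting. You state the Schoen--Simon theorem as though it applies directly to stationary integral varifolds that are almost minimizing in annuli. The Schoen--Simon regularity (and curvature estimate) theorem is a statement about \emph{stable} minimal hypersurfaces with small singular set, not about almost-minimizing varifolds per se. In Pitts' program the two are connected through the replacement construction: the almost-minimizing-in-annuli property lets one construct, in each small annulus, a replacement varifold that agrees with $V$ outside, has the same mass, and is \emph{locally stable}; Schoen--Simon then applies to the replacements to give interior regularity and the codimension-$7$ bound on the singular set, and a unique-continuation / gluing argument transfers this regularity back to $V$ itself. (It is also in this step, via the curvature estimates for stable hypersurfaces, that Schoen--Simon lift the ambient dimension restriction $n+1 \leq 6$ present in Pitts' monograph to arbitrary $n+1$.) With that bridge made explicit, the outline is the standard proof.
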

\medskip
Recently, Almgren-Pitts min-max theory has been further developed to show that minimal hypersurfaces exist in abundance when the ambient dimension $3 \leq n+1 \leq 7$. By the results of Marques-Neves \cite{MN_ricci_positive} and Song \cite{song} every closed Riemannian \mf $N$ of dimension $3\leq n+1 \leq 7$ contains infinitely many minimal hypersurfaces. Moreover, Irie, Marques and Neves have shown that \cite{IMN} for a generic metric the union of all closed, minimal hypersurfaces is dense in $N$; this theorem was later improved by Marques, Neves and Song in \cite{MNS} where they proved that for a generic metric \te an equidistributed sequence of closed, minimal hypersufaces in $N$. The Weyl law for the
volume spectrum proved by Liokumovich, Marques and Neves \cite{LMN} played a major role in the arguments of \cite{IMN} and \cite{MNS}. There is yet another proof of the existence of infinitely many closed, minimal hypersurfaces for a generic \mt on $N$ which follows form the papers by Marques-Neves \cite{MN_ricci_positive} and Zhou \cite{Zhou2}. The reason of the upper bound of the dimension $n+1\leq 7$ in \cite{song}, \cite{IMN}, \cite{MNS}, \cite{Zhou2} is that the space of singular, minimal hypersurfaces is not well understood unlike the smooth case (\cite{white}, \cite{white2}).
\\\\
Using the Allen-Cahn equation Chodosh and Mantoulidis \cite{CM} have proved the existence of infinitely many minimal surfaces for generic metrics in dimension $3$; Gaspar and Guaraco \cite{GG} have given alternative proofs of the above mentioned density and equidistribution theorems.
\\\\
In higher dimensions, Li \cite{Li} has proved that a closed \mf $M^{n+1}$, $n+1 \geq8$ equipped with a generic Riemannian \mt contains infinitely many singular, minimal hypersurfaces with optimal regularity (i.e. the singular set has Hausdorff dimension atmost $n-7$).
\\\\
One of the key ingredients of the papers \cite{song}, \cite{IMN}, \cite{MNS}, \cite{Zhou2} is Sharp's compactness theorem \cite{Sharp} which asserts certain compactness properties of the set of smooth, closed, minimal hypersurfaces in a Riemannian \mf $(M^{n+1},g)$, $3\leq n+1 \leq 7$ with bounded volume and index. This result was generalized by Ambrozio, Carlotto and Sharp \cite{ACS} where instead of bounded volume and index, an upper bound of the volume and a lower bound of the $p$-th Jacobi eigenvalue $\lambda_p$ (for some $p\in \mathbb{N}$) was assumed. (We note that for a smooth, closed minimal hypersurface $\Sigma$, Ind$(\Sigma) \leq I$ is equivalent to $\lambda_{I+1}(\Sigma)\geq 0$.)
\\\\
In the present article we will suitably generalize the results of \cite{Sharp} and \cite{ACS} in higher dimensions; for that, we need to consider the minimal hypersurfaces which may have singularities. We will state the notion of the index and the $p$-th Jacobi eigenvalue for a stationary $n$-varifold  and prove the following Theorem.
\\
\begin{theorem}
	Let $\{M_k\}_{k=1}^{\infty}$ be a sequence of closed, connected, singular, minimal hypersurfaces in a closed Riemannian \mf $(N^{n+1},g)$, $n+1 \geq 3$. Let $V_k= |M_k|$, the varifold associated to $M_k$. Suppose, there exist $\Lambda>0$, $\alpha\geq 0$, $p\in \bbn$ such that for all $k$
	\begin{itemize}
	    \item $\mathcal{H}^{n-2}(\text{sing}(M_k))=0$
		\item $\mathcal{H}^n(M_k)=\|V_k\|(N)\leq \Lambda$
		\item $\lambda_p(V_k)\geq -\alpha$
	\end{itemize}
	Then there is a stationary, integral varifold  $V$ \st  possibly after passing to a subsequence, $V_k \longrightarrow V$ in the \textup{\textbf{F}} \mt. Moreover, denoting $M=\text{spt}(V)$ we have
	\begin{itemize}
	    \item $\|V\|(N)\leq \Lambda$
		\item $\lambda_p(V)\geq -\alpha$
		\item $\mathcal{H}^{s}(\text{sing}(M))=0 \quad \forall s >n-7$
		\item The convergence is smooth and graphical over the compact subsets of $\text{reg}(M) \setminus \mathcal{Y}$ where $\mathcal{Y}$ is a finite subset of $reg(M)$ with $|\mathcal{Y}|\leq p-1$.
	\end{itemize} 
	\label{main theorem to prove}
\end{theorem}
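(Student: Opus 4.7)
The plan is to follow the overall scheme of Sharp \cite{Sharp} and Ambrozio--Carlotto--Sharp \cite{ACS}, modified to accommodate both the admissible singular sets of the $M_k$ and the possibly higher-dimensional singularities of the limit. First, the mass bound $\|V_k\|(N)\leq\Lambda$ together with standard Radon-measure compactness yields a subsequential weak/$\mathbf{F}$-limit varifold $V$ with $\|V\|(N)\leq\Lambda$; stationarity passes to weak limits, and Allard's integral compactness theorem gives that $V$ is integral once one observes that the hypothesis $\mathcal{H}^{n-2}(\mathrm{sing}(M_k))=0$ ensures each $|M_k|$ is genuinely a stationary integral $n$-varifold satisfying monotonicity.

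The crux of the theorem is the local regularity/smooth-convergence step. I would define $\mathcal{Y}\subset \mathrm{reg}(M)$ as the set of points near which $M_k$ does not converge smoothly and graphically to $M$, and prove simultaneously that (i) away from $\mathcal{Y}$ the convergence is smooth graphical on compacta and (ii) $|\mathcal{Y}|\leq p-1$. For (i) I would show that if $x\in \mathrm{reg}(M)\setminus\mathcal{Y}$ then on some small ambient ball $B$ around $x$, $M_k\cap B$ is stable for all large $k$, so the Schoen--Simon curvature estimates apply (the stability inequality being tested against Lipschitz cutoffs vanishing near $\mathrm{sing}(M_k)$, made lossless by $\mathcal{H}^{n-2}(\mathrm{sing}(M_k))=0$), yielding uniform $C^{2,\alpha}$ bounds and hence smooth graphical sublimits. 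For (ii) I would argue by contradiction: $p$ distinct points of $\mathcal{Y}$ would yield, on $p$ pairwise disjoint shrinking balls, cutoff Jacobi test functions $\varphi_{i,k}$ whose Rayleigh quotients on $M_k$ tend to $-\infty$, producing a $p$-dimensional subspace on which the Jacobi form is eventually strictly less than $-\alpha$, contradicting $\lambda_p(V_k)\geq -\alpha$. Combining (i) with the Schoen--Simon dimension bound applied to the locally stable limit then gives $\mathcal{H}^s(\mathrm{sing}(M))=0$ for all $s>n-7$.

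The remaining inequality $\lambda_p(V)\geq -\alpha$ is an upper-semicontinuity statement I would derive from the variational characterization. Given a $p$-dimensional subspace of admissible test functions on $M$, I would transport them to $M_k$ via the graphical identification on $\mathrm{reg}(M)\setminus\mathcal{Y}$ and cut them off in small neighborhoods of $\mathcal{Y}\cup\mathrm{sing}(M)$. Because $\mathrm{sing}(M)$ has Hausdorff dimension at most $n-7$ and $\mathcal{Y}$ is finite, logarithmic or capacity cutoffs produce errors in the Jacobi quadratic form that vanish as the cutoff shrinks, so the Rayleigh quotients on $M_k$ converge to those on $M$ and the inequality $\lambda_p(V_k)\geq -\alpha$ is preserved in the limit.

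The main obstacle I expect is step (i): justifying Schoen--Simon style curvature estimates for $M_k$ whose own singular set is only $\mathcal{H}^{n-2}$-negligible. In the smooth setting of \cite{Sharp,ACS} one invokes these estimates directly from local stability, but here one must first verify that the stability integration-by-parts identities and the cutoff constructions underpinning Schoen--Simon remain valid across the singular set of $M_k$, and that the resulting curvature bounds are strong enough to pass to smooth sublimits on open subsets of the limit's regular part. This is precisely where the hypothesis $\mathcal{H}^{n-2}(\mathrm{sing}(M_k))=0$ is doing the work, and executing it carefully is the heaviest technical piece of the proof.
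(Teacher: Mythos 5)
Your overall scheme matches the paper's: Allard compactness for the limit, modified Schoen--Simon estimates for the local regularity and curvature bounds, a disjoint-balls argument to bound the exceptional set $\mathcal{Y}$ by $p-1$, and you correctly identify that the hypothesis $\mathcal{H}^{n-2}(\text{sing}(M_k))=0$ is exactly what lets the stability-type inequality (and hence the entire Schoen--Simon machinery) run across the singular set via Lipschitz cutoffs; the paper's Section 4 is devoted to precisely this. However, two of your steps diverge in ways that matter.

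First, for $\lambda_p(V)\geq-\alpha$ you propose to transport a $p$-dimensional family of intrinsic test fields from $M$ to $M_k$ via the graphical identification and then cut off near $\mathcal{Y}\cup\text{sing}(M)$. That is workable but heavy, and in this high-dimensional setting it is also awkward: $\text{sing}(M)$ may be $(n-7)$-dimensional and the transport presupposes the graphical convergence you are simultaneously trying to establish. The paper avoids this entirely by taking the Marques--Neves \emph{ambient} definition of $\lambda_p$ (Definition~\ref{Def of jacobi eigenvalue}): one tests with $k$-parameter families of ambient diffeomorphisms, and since $A^{V_k}\to A^{V}$ and $K^{V_k}\to K^{V}$ under $\mathbf{F}$-convergence (Remark~\ref{A^V_i tends to A^V}), the inequality $\lambda_p(V)\geq-\alpha$ follows in one line from Remark~\ref{semicontinuity of eigenvalue}, with no transport and no cutoffs. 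Adopting this viewpoint would streamline your whole argument, since the same ambient definition also gives the disjoint-open-sets lemma (Lemma~\ref{lemma disjoint open sets}) essentially for free.

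Second, there is a genuine gap in your step (ii). You assert that non-smooth convergence near $p$ points produces ``cutoff Jacobi test functions whose Rayleigh quotients on $M_k$ tend to $-\infty$.'' That claim is both stronger than what is needed and not justified as stated. What the contradiction actually requires is that on each of $p$ disjoint balls around the bad points, $\lambda_1(V_k\mres B)<-\alpha$ for all large $k$, and the correct mechanism is the \emph{contrapositive} of the modified Schoen--Simon smooth-convergence statement (Lemma~\ref{modified SS smooth convergence}): if $\lambda_1(V_k\mres B)\geq-\alpha$ along a subsequence, then Theorem~\ref{modification of SS thm-1} yields uniform curvature bounds and hence smooth graphical convergence on a smaller ball, contradicting badness. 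Combined with Lemma~\ref{lemma disjoint open sets}, $p$ bad points would force $\lambda_p(V_k)<-\alpha$, which is the contradiction. You should replace the vague blow-up/Rayleigh-quotient heuristic with this precise dichotomy; it is the same mechanism that also controls $\dim_{\mathcal{H}}(\text{sing}(M))$ via Theorem~\ref{modification of SS thm-2} and the good/bad ball covering argument.
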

\medskip

From the definitions of the index and the Jacobi eigenvalue, it will be clear that $\textrm{Ind}(V) \leq I$ if and only if $\lambda_{I+1}(V) \geq 0$. \tf Theorem \ref{main theorem to prove} has the following Theorem as a corollary which generalizes Sharp's compactness theorem \cite{Sharp} in higher dimensions.
\\
\begin{theorem}
	Let $\{M_k\}_{k=1}^{\infty}$ be a sequence of closed, connected, singular, minimal hypersurfaces in a closed Riemannian \mf $(N^{n+1},g)$, $n+1 \geq 3$. Suppose for all $k$, $\mathcal{H}^{n-2}(\text{sing}(M_k))=0$, $\mathcal{H}^n(M_k)\leq \Lambda$ and \textup{Ind}$(|M_k|) \leq I$. Then there is a stationary, integral varifold $V$ \st  possibly after passing to a subsequence, $|M_k| \longrightarrow V$ in the \textup{\textbf{F}} \mt, $\|V\|(N)\leq \Lambda$ and \textup{Ind}$(V)\leq I$. Further, if $M= \text{spt}(V)$ then $\mathcal{H}^{s}(\text{sing}(M))=0 \quad \forall s >n-7$ and the convergence is smooth and graphical over the compact subsets of $\text{reg}(M) \setminus \mathcal{Y}$ where $\mathcal{Y}$ is a finite subset of $reg(M)$ with $|\mathcal{Y}|\leq I$. \label{sharp}
\end{theorem}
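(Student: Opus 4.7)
The plan is to deduce Theorem~\ref{sharp} as an immediate corollary of Theorem~\ref{main theorem to prove}. The bridge is the equivalence highlighted in the paragraph just before Theorem~\ref{sharp}: for a stationary integral varifold $V$, one has $\textrm{Ind}(V)\leq I$ if and only if $\lambda_{I+1}(V)\geq 0$. Granting this equivalence, which should follow directly from the definitions of index and Jacobi eigenvalues for stationary varifolds introduced earlier in the paper via a standard Courant-type min--max characterization of the spectrum of the Jacobi quadratic form, the proof becomes essentially a change of variables.

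Concretely, I would apply Theorem~\ref{main theorem to prove} with the specific choices $p=I+1$ and $\alpha=0$. The first two hypotheses in the list of Theorem~\ref{main theorem to prove} (vanishing of $\mathcal{H}^{n-2}$ of the singular sets, and the uniform area bound $\|V_k\|(N)\leq\Lambda$) are identical to the corresponding hypotheses of Theorem~\ref{sharp}. The third hypothesis $\lambda_p(V_k)\geq -\alpha$ becomes $\lambda_{I+1}(|M_k|)\geq 0$, which by the equivalence is exactly the index bound $\textrm{Ind}(|M_k|)\leq I$ assumed in Theorem~\ref{sharp}. Consequently Theorem~\ref{main theorem to prove} produces, after extraction of a subsequence, a stationary integral varifold $V$ with $|M_k|\to V$ in the $\mathbf{F}$-metric, such that $\|V\|(N)\leq\Lambda$, $\lambda_{I+1}(V)\geq 0$, $\mathcal{H}^s(\text{sing}(M))=0$ for all $s>n-7$, and such that the convergence is smooth and graphical on compact subsets of $\text{reg}(M)\setminus\mathcal{Y}$ for a finite set $\mathcal{Y}\subset\text{reg}(M)$ with $|\mathcal{Y}|\leq p-1=I$.

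To conclude, I would invoke the equivalence once more, now in the reverse direction on the limit: $\lambda_{I+1}(V)\geq 0$ implies $\textrm{Ind}(V)\leq I$. Combining with the list above, all four assertions of Theorem~\ref{sharp} are in hand. The only piece of genuine content beyond citing Theorem~\ref{main theorem to prove} is therefore the equivalence $\textrm{Ind}(\cdot)\leq I\iff \lambda_{I+1}(\cdot)\geq 0$ at the varifold level, and I expect no serious obstacle here: the smooth analogue is classical, and the varifold definitions of index and Jacobi eigenvalue will have been set up in this paper precisely so that the same min--max proof transcribes verbatim. All other steps are bookkeeping.
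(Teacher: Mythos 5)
Your proposal is correct and matches the paper's own (implicit) proof exactly: the paper deduces Theorem~\ref{sharp} from Theorem~\ref{main theorem to prove} by setting $p=I+1$, $\alpha=0$ and invoking the equivalence $\textup{Ind}(V)\leq I \iff \lambda_{I+1}(V)\geq 0$, which is stated in the paragraph preceding the theorem and is built directly into Definition~\ref{Def of jacobi eigenvalue} (via the monotonicity $\lambda_k(V)<-\alpha \Rightarrow \lambda_{k-1}(V)<-\alpha$ obtained by restricting the family of diffeomorphisms). The only minor quibble is that the equivalence needs no ``min--max proof transcribed verbatim'' — it is an immediate consequence of how $\textup{Ind}(V)$ is defined as the supremum of the initial segment $\{I\in\mathbb{N}:\lambda_I(V)<0\}$ — but this does not affect the correctness of your argument.
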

\medskip

The space of minimal hypersurfaces with bounded volume and index is particularly interesting; due to the work of Marques and Neves \cite{MN_index}, the minimal hypersurfaces constructed by the min-max procedure have bounded volume and index. More precisely, they have proved the following Theorem.
\\
\begin{theorem}[\cite{MN_index}]
	Suppose $(N^{n+1},g)$ is a closed Riemannian manifold, $n+1 \geq 3$. Let $X$ be an $m$ dimensional simplicial complex and $\Pi$ be a $\mathcal{F}$-homotopy class of continuous maps from $X$ to $\mathcal{Z}_n(N;\textup{\textbf{F}};\mathbb{Z}_2)$. We define
	$$\textup{\textbf{L}}(\Pi) = \inf_{\Phi \in \Pi} \sup_{x \in X} \left\{\textup{\textbf{M}}(\Phi(x))\right\}$$
	Then there is a stationary, integral varifold $V$ with $spt(V)=\Sigma$ \st
	\begin{itemize}
		\item $\|V\|(N)=\textup{\textbf{L}}(\Pi)$
		\item \textup{Ind}$(V) \leq m$ 
		\item $\mathcal{H}^{s}(\text{sing}(\Sigma))=0 \quad \forall s >n-7$.
	\end{itemize}
\end{theorem}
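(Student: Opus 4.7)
The plan is to combine the Almgren--Pitts min-max machinery with a deformation argument in the space of cycles. Existence of a stationary integral varifold $V$ with $\|V\|(N) = \textup{\textbf{L}}(\Pi)$ is produced in the standard way: one takes a minimizing sequence $\Phi_i \in \Pi$ realizing $\textup{\textbf{L}}(\Pi)$, applies the pull-tight deformation so that every varifold limit along sequences $\Phi_i(x_i)$ with $\textup{\textbf{M}}(\Phi_i(x_i)) \to \textup{\textbf{L}}(\Pi)$ is stationary, and extracts a subsequential weak limit $V$. The regularity assertion $\mathcal{H}^s(\text{sing}(\Sigma)) = 0$ for $s > n-7$ is then obtained by verifying that $V$ is almost minimizing in small annuli and invoking the Schoen--Simon curvature estimates for stable minimal hypersurfaces.

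The heart of the argument is the index bound $\text{Ind}(V) \leq m$. I would argue by contradiction: suppose every stationary integral varifold $V$ with $\|V\|(N) = \textup{\textbf{L}}(\Pi)$ produced by this procedure satisfies $\text{Ind}(V) \geq m+1$. For such a $V$, the Jacobi operator on $\text{reg}(\Sigma)$ admits at least $m+1$ orthonormal negative eigenfunctions; multiplying by the unit normal and cutting off near $\text{sing}(\Sigma)$, these yield $m+1$ independent normal vector fields supported in the regular part. Integrating their flow one builds a continuous map $F_V : B^{m+1} \to \mathcal{Z}_n(N; \textup{\textbf{F}}; \mathbb{Z}_2)$ with $F_V(0) = V$ and $\sup_{\partial B^{m+1}} \textup{\textbf{M}}(F_V) < \textup{\textbf{L}}(\Pi) - \eta$ for some $\eta = \eta(V) > 0$. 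Since $\dim X = m < m+1$, one covers a neighborhood of the critical set by the images of such deformations and, via a partition-of-unity/interpolation scheme, modifies any nearly optimal $\Phi_i \in \Pi$ into a competitor $\tilde\Phi_i \in \Pi$ with $\sup_x \textup{\textbf{M}}(\tilde\Phi_i(x)) < \textup{\textbf{L}}(\Pi)$, contradicting the definition of $\textup{\textbf{L}}(\Pi)$.

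The main obstacle lies in making this local-to-global step rigorous in the discretized Almgren--Pitts setting, i.e.\ converting the smooth geometric deformations $F_V$ into genuine maps into the cycle space with controlled $\textup{\textbf{M}}$-mass and preserving the $\textup{\textbf{F}}$-homotopy class of $\Phi_i$. This requires delicate interpolation theorems extending Almgren's original construction, together with a careful covering argument on the critical set. A further subtlety concerns the singular set of $\Sigma$: the Jacobi-field deformations are \emph{a priori} only defined on $\text{reg}(\Sigma)$, so one must smoothly cut off near $\text{sing}(\Sigma)$ and use the Hausdorff codimension bound $\dim_{\mathcal{H}}\text{sing}(\Sigma) \leq n-7$ to ensure that the cut-off error in mass is dominated by the gain $\eta$.
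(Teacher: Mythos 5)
This theorem is quoted by the paper from Marques--Neves \cite{MN_index}; the paper supplies no proof, so there is no internal argument to compare against. Your sketch does track the broad outline of the actual Marques--Neves proof: pull-tight a minimizing sequence, extract a stationary integral varifold from the critical set, prove regularity via the almost-minimizing property and Schoen--Simon, and obtain the index bound by deformation and contradiction combined with a covering/interpolation argument. That said, a few points need sharpening.

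You phrase the contradiction in terms of $m+1$ negative eigenfunctions of the Jacobi operator on $\text{reg}(\Sigma)$, but at the stage where one wants to build the deformation $F_V$, the regularity of $\Sigma$ is not yet known — $\Sigma$ is only the support of a stationary integral varifold. The varifold index in \cite{MN_index}, and in Definition~\ref{Def of jacobi eigenvalue} of the present paper, is defined directly via $k$-parameter families of ambient diffeomorphisms precisely to avoid invoking a Jacobi operator and its discrete spectrum before regularity is in hand; passing through eigenfunctions is only legitimate once Theorem~\ref{equiv to smooth} applies, which requires $\mathcal{H}^{n-2}(\text{sing}(\Sigma))=0$, i.e.\ the codimension-$7$ bound must already be established. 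The logical ordering matters here. Relatedly, your argument needs the \emph{same} varifold to satisfy the mass equality, the almost-minimizing property (which drives regularity), and the index bound; these are obtained by distinct mechanisms, and arranging for them to coexist on a single element of the critical set is one of the genuinely nontrivial structural points of \cite{MN_index}, not something one gets for free. Finally, the covering-and-interpolation step you correctly identify as the main obstacle is not a loose end but rather the bulk of the technical content of \cite{MN_index} (openness of $(m+1)$-unstability in the $\textbf{F}$-metric on varifolds, a quantitative deformation theorem, and continuous-to-discrete interpolation in the cycle space); without these your outline does not yet close.
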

\bigskip

The index upper bound of the minimal hypersurfaces in the Allen-Cahn settings has been proved by Gaspar \cite{Gaspar} and Hiesmayr \cite{H}.
\\\\
If we take $M_k$ to be $M$ for all $k$ in Theorem \ref{main theorem to prove}, we get the following regularity result.
\\
\begin{proposition}
	
	Let $M^n$ be a singular, minimal hypersurface in $(N^{n+1},g)$, $n+1 \geq 3$. Suppose, $\mathcal{H}^{n-2}(\text{sing}(M))=0$ and $\lambda_p(|M|) > -\infty$  for some $p$. Then $\mathcal{H}^{s}(\text{sing}(M))=0 \;\forall s >n-7$.
	
\end{proposition}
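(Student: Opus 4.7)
The plan is to obtain this as a direct corollary of Theorem \ref{main theorem to prove} by taking the constant sequence $M_k = M$ for all $k \in \mathbb{N}$. First I would verify the three hypotheses of the main theorem. Set $\Lambda := \mathcal{H}^n(M)$, which is finite since $M$ is a closed hypersurface in a closed Riemannian manifold, and set $\alpha := \max\{0, -\lambda_p(|M|)\}$, which is finite by the hypothesis $\lambda_p(|M|) > -\infty$. Then for every $k$ the required inequalities $\mathcal{H}^{n-2}(\text{sing}(M_k)) = 0$, $\mathcal{H}^n(M_k) \leq \Lambda$, and $\lambda_p(V_k) \geq -\alpha$ are immediate.

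Theorem \ref{main theorem to prove} then delivers, after passing to a subsequence, a stationary integral varifold $V$ with $V_k \to V$ in the \textbf{F}-metric and with $\mathcal{H}^s(\text{sing}(\text{spt}(V))) = 0$ for all $s > n-7$. Since the sequence $V_k = |M|$ is constant, the \textbf{F}-limit must be $V = |M|$ itself; this uses only that \textbf{F} is a metric on the space of integral varifolds, and that $|M|$ is indeed integral (its regular part has unit multiplicity and the singular part has vanishing $\mathcal{H}^n$-measure, since $\mathcal{H}^{n-2}(\text{sing}(M)) = 0$).

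Next I would identify $\text{spt}(V)$ with $M$. Because $M$ is closed as a subset of $N$ and $\text{reg}(M)$ is dense in $M$ (the singular set has codimension at least two, hence empty interior in $M$), we have $\text{spt}(|M|) = M$. Consequently the regularity conclusion $\mathcal{H}^s(\text{sing}(\text{spt}(V))) = 0$ for all $s > n-7$ transfers verbatim to $M$, which is the desired statement.

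There is no real obstacle here beyond bookkeeping: the entire content of the proposition is already packaged inside Theorem \ref{main theorem to prove}, and the only thing to check carefully is that applying a compactness theorem to a constant sequence is legitimate, which reduces to the two observations above (integrality of $|M|$ and the identification $\text{spt}(|M|) = M$). No new analytic input is required.
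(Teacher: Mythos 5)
Your proof is correct and follows exactly the route the paper itself takes: the paper's entire justification is the one-line observation that the proposition is Theorem \ref{main theorem to prove} applied to the constant sequence $M_k=M$. Your bookkeeping (identifying the \textbf{F}-limit of a constant sequence, noting $\operatorname{spt}(|M|)=M$, and packaging the hypothesis $\lambda_p(|M|)>-\infty$ as $\lambda_p(|M|)\geq -\alpha$ for a suitable $\alpha\geq 0$) simply makes explicit what the paper leaves implicit.
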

\medskip
The proof of Theorem \ref{main theorem to prove} is very similar to that of \cite{Sharp} and \cite{ACS}. However, for the sake of completeness we will give a self-contained proof of it.
\\\\
\textbf{Acknowledgements.} I am very grateful to my advisor Prof. Fernando Codá Marques for many helpful discussions and for his support and guidance. I also thank Yangyang Li and Antoine Song for answering some of my questions. The author is partially supported by NSF grant DMS-1811840.

\medskip

\section{Notations and Preliminaries}

\subsection{Notations} Here we summarize the notations which will be frequently used later.
\[
\begin{array}{ll}
	\mathcal{V}_n(U) & \text{the set of } n-\text{varifolds in } U \\ 
	I\mathcal{V}_n(U) & \text{the set of integral } n-\text{varifolds in } U \\
	\mathcal{H}^s & \text{the Hausdorff measure of dimension } s \\
	\|V\| & \text{the Radon measure associated to the varifold } V \\ 
	|\Sigma| & \text{the varifold associated to a singular hypersurface } \Sigma \\
	\delta^2 V & 2\text{-nd variaration of the stationary varifold } V \\
	\text{Ind}(.) & \text{index (of a stationary hypersurface or varifold)}\\
	\lambda_k(.) & k-\text{th Jacobi eigenvalue (of a stationary hypersurface or varifold)}\\
	sing(\Sigma) & \text{the singular part of } \Sigma\\
	reg(\Sigma) & \text{the regular part of } \Sigma\\
	B(p,r) & \text{open ball of radius } r \text{ centered at }p
\end{array}
\] 

\subsection{Preliminaries from geometric measure theory}
Here we will briefly discuss the notion of varifold and various related concepts; further details can be found in Simon's book \cite{Sim}.
\\\\
Given a Riemannian \mf $(U^{n+1},g)$ let $G_k(U)$ denote the Grassmanian bundle of $k$-dimensional hyperplanes over $U$. A $k$\textit{-varifold} in $U$ is a positive Radon measure on $G_k(U)$. The topology on the space of $k$-varifolds $\mathcal{V}_k(U)$ is given by the weak* topology i.e. a net $\{V_i\}_{i \in I} \subset \mathcal{V}_k(U)$ converges to $V$ iff
$$\int _{G_k(U)} f(x, \omega)dV_i(x, \omega) \longrightarrow \int _{G_k(U)} f(x, \omega)dV(x, \omega)$$
for all $f \in C_c(G_k(U))$. This topology is metrizable and the metric is denoted by \textbf{F}. If $V \in \mathcal{V}_k(U)$ and $\pi:G_k(U) \longrightarrow U$ denotes the canonical projection then $\|V\| = \pi_{*}V$ is a Radon measure on $U$; $\|V\|(A)=V(\pi^{-1}(A))$.
\\\\
If $\varphi : U \longrightarrow U'$ is a diffeomorphism and $ V \in \mathcal{V}_k(U)$, we define $\varphi_{*}V \in \mathcal{V}_k(U')$ by the following formula

$$(\varphi_{*}V)(g) = \int _{G_k(U)}g\left(\varphi(x),d_x \varphi(\omega)\right)J\varphi(x, \omega)dV(x, \omega)$$
where 
$$J\varphi(x, \omega) = \left(\det\left(\left(d_x \varphi \big |_{\omega}\right)^t \circ \left(d_x \varphi \big |_{\omega}\right)\right)\right)^{1/2}$$

is the Jacobian factor and $g \in C_c(G_k(U'))$. Given a compactly supported, smooth vector-field $X$ on $U$ let $\varphi_t$ denote the flow of $X$; \textit{the first variation and second variation of} $V$ are given by

$$\delta V(X) = \frac{d}{dt} \bigg |_0 \|(\varphi_t)_{*}V \|(U) \qquad ; \qquad
\delta^2 V(X,X) = \frac{d^2}{dt^2} \bigg |_0 \|(\varphi_t)_{*}V \|(U)$$

We say that \textit{$V$ is stationary} if $\delta V (X)= 0$ for all $X$ and a stationary varifold $V$ is called \textit{stable} if $\delta^2V(X,X) \geq 0$ for all $X$.
\\\\
Given a $k$-rectifiable set $S \subset U$ and a non-negative function $ \theta \in L^1_{loc}(S, \mathcal{H}^k \mres S)$ we define the $k$-varifold $\textbf{v}(S, \theta)$ by
$$\textbf{v}(S, \theta)(f)= \int_S f(x, T_xS) \theta(x) d \mathcal{H}^k(x)$$
where $T_xS$ denotes the tangent space of $S$ at $x$ which exists $\mathcal{H}^k \mres S$-a.e. $V$ is called an \textit{integral $k$-varifold} if $V = \textbf{v}(S, \theta)$ for some $S$ and $\theta$ with $\theta$ taking non-negative integer values $\mathcal{H}^k \mres S$-a.e.
\\\\
In the present article we will only deal with $n$-varifolds and from now on we will simply write `varifold' instead of `$n$-varifold'. Given $A \subset U$ we define the regular and singular part of $A$
\begin{align*}
reg(A)=\{x \in A :\; &
\exists \text{ open } P \subset U \text{ containing } x \text{ such that } \\
& P \cap A \text{ is a smooth, embedded hypersurface} \}
\end{align*} 
and 
$$ sing(A) = A \setminus reg(A).$$

Further, by a singular, minimal hypersurface $\Sigma$ in $U$ we will mean that $\Sigma \subset U$ is a closed, $n$-rectifiable set with $\mathcal{H}^{n-1}(sing(\Sigma))=0$ and $|\Sigma|=\textbf{v}(\Sigma , \textbf{1}_{\Sigma})$ (where $\textbf{1}_{\Sigma}$ is the constant function $1$) is stationary. By \cite{Ilmanen} (Equation $(4)$), $|\Sigma|$ is stationary in $U$ if and only if $reg(\Sigma)$ is a smooth, minimal hypersurface and $\mathcal{H}^n(\Sigma \cap B(x,r)) \leq C(U')r^n$ for all $B(x,r) \subseteq U' \subset \subset U$.

\section{Index and Jacobi eigenvalues of a stationary varifold}

We will now state the notion of the index and the Jacobi eigenvalues of a stationary varifold following the paper by Marques and Neves \cite{MN_index}. The definition is motivated by the following min-max characterization of $\lambda_k(\Sigma)$ when $\Sigma^n \subset (U^{n+1},g)$ is a smooth, minimal hypersurface.

$$ \lambda_k(\Sigma)=\inf_{\dim (V)=k} \sup_{X \in V \setminus \{0\}} \left(\frac{\delta^2\Sigma(X,X)}{\int_{\Sigma}|X|^2 d\mathcal{H}^n}\right) $$

The infimum is over all the $k$-dimensional linear subspaces $V \subset \Gamma_c(N\Sigma)=$ compactly supported smooth sections of $N\Sigma$. \tf $\lambda_k(\Sigma) <a$ iff there is a $k$-dimensional subspace $V \subset \Gamma_c(N\Sigma)$ \st for all $X \in V \setminus \{0\}$, 
$$ \delta^2\Sigma (X,X) < a \int_{\Sigma}|X|^2 d\mathcal{H}^n $$
\\\\
Given a Riemannian \mf $(U^{n+1},g)$ and $k \in \mathbb{N}$ ($0 \notin \bbn$), \textit{a k-parameter family of diffeomorphisms} is a smooth map $F: \overline{B}^k(0,1)(\subset \bbr^k) \longrightarrow \text{Diff}(U)$ \st 
\begin{itemize}
\item $F_{v}(=F(v))=(F_{-v})^{-1} \;\; \forall v \in \overline{B}^k(0,1)$ and $F_0 = \text{Id}$
\item \te open $U'\subset \subset U$ \st $F_v \big |_{U\setminus U'} = \text{Id}\;\; \forall v \in \overline{B}^k(0,1)$
\end{itemize} 
If $F$ is a $k$-parameter family of diffeomorphisms, we define the vector-fields $Y_i$, $i = 1,...,k$ by 
$$Y_i \big | _p = \frac{d}{dt}\bigg |_0F_{te_i}(p)$$ 
Suppose further, we have a stationary varifold $V$ in $U$. Then we define a smooth function $A^V$ and a quadratic form $K^V$ as follows.
$$A^V:\overline{B}^k(0,1) \longrightarrow [0, \infty), \qquad  A^V(v)=\|(F_v)_{\#}V\|(U)$$
$$K^V: \bbr^k \times \bbr^k \longrightarrow \bbr, \qquad K^V(u,u)= \big\|\sum_i u_i Y_i\big\|_{L^2(U, \|V\|)}^2$$

\medskip
\begin{remark}
	If $V_i \longrightarrow V$ in the $\textbf{F}$ \mt then $A^{V_{i}} \longrightarrow A^V$ (\cite{Pitts}, Section 2.3) in the smooth topology and also $K^{V_{i}} \longrightarrow K^V$ smoothly on compact subsets.
	\label{A^V_i tends to A^V}
\end{remark}
\bigskip 

\begin{definition}
Given a  stationary varifold $V$ in $U$, $k \in \bbn$ and $\alpha \geq 0$ we say that $\lambda_k(V) < -\alpha $ if \te a $k$-parameter family of diffeomorphisms $F$ \st
$$ \left. D^2A^{V}\right\vert_0(u,u)<-\alpha K^V(u,u)\;\;$$
for all $ u \in \mathbb{R}^k \setminus \{0\} $ or equivalently for all $u \in \mathbb{R}^k$ with $\|u\|=1$.
Further, by $\lambda_k(V)\geq -\alpha $ we will mean that $\lambda_k(V) < -\alpha $ does not hold. If $\lambda_k(V) < -\alpha $ then restricting $F$ to $\overline{B}^{k-1}(0,1) \subset \overline{B}^k(0,1)$ we get that $\lambda_{k-1}(V) < -\alpha $ as well. Therefore, it will be natural to define 
\begin{equation*}
 \textup{Ind}(V) = 
 \begin{cases}
  0 & \text{if} \;\; \{I \in \bbn : \lambda_I(V) <0 \} = \emptyset, 
 \\
 \sup\{I \in \bbn : \lambda_I(V) <0 \} & \text{otherwise}.
 \end{cases}  
\end{equation*}
 
Hence, $\text{Ind}(V)\leq I$ is equivalent to $\lambda_{I+1}(V)\geq 0$. Further, $\text{Ind}(V)=0$ iff $\lambda_1(V) \geq 0$ iff $V$ is stable.
\label{Def of jacobi eigenvalue}
\end{definition}
\bigskip
\begin{remark}
	By Remark \ref{A^V_i tends to A^V} and from the above definition it is clear that whenever $\lambda_k(V) < -\alpha$ and $\textbf{F}(V,V')$ is sufficiently small, we have $\lambda_k(V') < -\alpha$ as well.
	\label{semicontinuity of eigenvalue}
\end{remark}
\bigskip
\begin{proposition}
	Given $\Lambda>0$, $k \in \bbn$ and $\alpha \geq 0$, the following sets are compact with respect to the \textup{\textbf{F}} metric topology.
	$$\mathcal{M}_U(\Lambda, k, \alpha)=\{V \in \mathcal{V}_n(U): V \text{ is stationary, } \|V\| \leq \Lambda \text{ and } \lambda_k(V) \geq -\alpha \} \subset \mathcal{V}_n(U)$$
	$$\mathcal{M'}_U(\Lambda, k, \alpha)=\{V \in I\mathcal{V}_n(U):  V \text{ is stationary, } \|V\| \leq \Lambda \text{ and } \lambda_k(V) \geq -\alpha \} \subset I\mathcal{V}_n(U)$$
\end{proposition}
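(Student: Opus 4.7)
My plan is to reduce the statement to three standard closure properties plus the semicontinuity of the eigenvalue that was already recorded in Remark \ref{semicontinuity of eigenvalue}. Fix a sequence $\{V_i\} \subset \mathcal{M}_U(\Lambda, k, \alpha)$. Since the sets $\{V \in \mathcal{V}_n(U) : \|V\|(U) \leq \Lambda\}$ are compact in the $\textbf{F}$ metric (this is the weak-$*$ compactness of Radon measures on $G_n(U)$ of uniformly bounded total mass, recast through the Pitts metric), I can extract a subsequence, not relabelled, with $V_i \to V$ in $\textbf{F}$ for some $V \in \mathcal{V}_n(U)$. Lower semicontinuity of mass under weak convergence gives $\|V\|(U) \leq \liminf_i \|V_i\|(U) \leq \Lambda$.

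Next I would verify that $V$ is stationary: the first variation $\delta V(X) = \int \operatorname{div}_\omega X \, dV(x,\omega)$ depends continuously on $V$ in the $\textbf{F}$ metric for each fixed compactly supported smooth $X$, so $\delta V(X) = \lim_i \delta V_i(X) = 0$ for every such $X$. For the second set $\mathcal{M}'_U(\Lambda, k, \alpha)$, I would additionally invoke Allard's compactness theorem for integral varifolds: a weak-$*$ limit of stationary integral varifolds of locally bounded mass is again an integral varifold. Hence $V \in I\mathcal{V}_n(U)$ in that case.

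The remaining point is to show $\lambda_k(V) \geq -\alpha$. I argue by contradiction: if $\lambda_k(V) < -\alpha$, then by Definition \ref{Def of jacobi eigenvalue} there is a $k$-parameter family of diffeomorphisms $F$ with
\[
\left.D^2 A^{V}\right\vert_0(u,u) < -\alpha\, K^V(u,u) \qquad \text{for all } u \in \mathbb{R}^k,\ \|u\|=1.
\]
By Remark \ref{A^V_i tends to A^V}, for the same family $F$ we have $A^{V_i} \to A^{V}$ smoothly and $K^{V_i} \to K^{V}$ smoothly on compact subsets. Since the sphere $\{\|u\|=1\} \subset \mathbb{R}^k$ is compact, the strict inequality above persists for all sufficiently large $i$, giving $\lambda_k(V_i) < -\alpha$ and contradicting the assumption $V_i \in \mathcal{M}_U(\Lambda,k,\alpha)$. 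This is precisely the content of Remark \ref{semicontinuity of eigenvalue}. Therefore $\lambda_k(V) \geq -\alpha$, finishing the argument for both $\mathcal{M}_U(\Lambda,k,\alpha)$ and $\mathcal{M}'_U(\Lambda,k,\alpha)$.

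The only nontrivial ingredient is Allard's integral compactness theorem, used to preserve integrality in the second statement; everything else is either general varifold theory or an invocation of the earlier remarks. So this proposition is really a bookkeeping consequence of how $\lambda_k$ was defined, and I do not expect any serious obstacle beyond citing Allard correctly.
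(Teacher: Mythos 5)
Your proposal matches the paper's proof almost line for line: extract a subsequence via weak-$*$ compactness of bounded-mass varifolds, note stationarity and the mass bound pass to the limit, invoke Allard for integrality in the second set, and then use Remark \ref{semicontinuity of eigenvalue} (whose content you correctly re-derive from Remark \ref{A^V_i tends to A^V} and compactness of the unit sphere in $\mathbb{R}^k$) to rule out $\lambda_k(V) < -\alpha$. The paper compresses the first two steps into a citation of ``standard compactness theorems,'' but the argument is identical.
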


\begin{proof}
	By the standard compactness theorems, if $\{V_i\}_{i=1}^{\infty}$ is a sequence of stationary varifolds with $\|V_i\| \leq \Lambda$ then upto a subsequence $V_i$ converges to a stationary varifold $V$ in the \textbf{F} \mt with $\|V\| \leq \Lambda$. Further, by Allard's theorem \cite{Allard} if $V_i$ 's are integral varifolds, $V$ is also an integral varifold. Moreover, by Remark \ref{semicontinuity of eigenvalue} if $\lambda_k(V) < -\alpha$ then for $i$ large $\lambda_k(V_i) < -\alpha $ as well. Hence, $\lambda_k(V) \geq -\alpha$.
\end{proof}
\bigskip
\begin{theorem}
Let $\Sigma$ be a singular, minimal hypersurface in $U$, $V=|\Sigma|$ and $\mathcal{H}^{n-2}(sing(\Sigma))=0$. Then, $\lambda_k(V) < -\alpha \leq 0 \Longleftrightarrow \lambda_k(reg (\Sigma)) < -\alpha$. Hence, $ \textup{Ind}(V) = \textup{Ind}(reg(\Sigma))$; therefore, $V$ is stable iff $reg(\Sigma)$ is stable.
\label{equiv to smooth}
\end{theorem}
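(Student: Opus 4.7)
The plan is to match witnesses of $\lambda_k(V) < -\alpha$ (a $k$-parameter family of diffeomorphisms $F$) with those of $\lambda_k(\text{reg}(\Sigma)) < -\alpha$ (a $k$-dimensional subspace of $\Gamma_c(N\text{reg}(\Sigma))$) via normal projection and ambient extension across the smooth regular part. The hypothesis $\mathcal{H}^{n-2}(\text{sing}(\Sigma))=0$ enters crucially only in the forward direction, to supply cutoffs with vanishing Dirichlet energy. I use throughout the standard fact that for a minimal hypersurface whose singular set has $\mathcal{H}^{n-1}$-measure zero (as is built into the paper's definition of singular minimal hypersurface), $\delta^2 V(X,X)$ depends only on $X^\perp\big|_{\text{reg}(\Sigma)}$: the tangential contribution integrates to zero by the divergence theorem on $\text{reg}(\Sigma)$, with no $\mathcal{H}^{n-1}$-contribution from $\text{sing}(\Sigma)$.

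For $(\Leftarrow)$, I take linearly independent $Z_1,\dots,Z_k \in \Gamma_c(N\text{reg}(\Sigma))$ realizing $\lambda_k(\text{reg}(\Sigma)) < -\alpha$ and extend each $Z_i$ by the normal exponential map in a tubular neighborhood of $\text{supp}(Z_i)\subset\text{reg}(\Sigma)$ (disjoint from $\text{sing}(\Sigma)$), cutting off to obtain a smooth compactly supported ambient vector field $Y_i$ on $U$ with $Y_i\big|_\Sigma = Z_i$. Then $F_v = \varphi_1^{\sum v_i Y_i}$ (time-one flow) is a valid $k$-parameter family, and $D^2 A^V\big|_0(u,u) = \delta^2 V(Y_u, Y_u) = \delta^2\text{reg}(\Sigma)(Z_u, Z_u)$, $K^V(u,u) = \int|Z_u|^2\,d\mathcal{H}^n$, using $\mathcal{H}^n(\text{sing}(\Sigma))=0$ and the normality of $Y_u\big|_\Sigma$; the strict inequality transfers to $\lambda_k(V)<-\alpha$.

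For $(\Rightarrow)$, I extract $Y_i = \frac{d}{dt}\big|_0 F_{te_i}$ from the witness $F$ and set $Z_i = Y_i^\perp\in C^\infty(N\text{reg}(\Sigma))$. If $Z_u\equiv 0$ for some $u\neq 0$, the normal-only property gives $\delta^2 V(Y_u,Y_u)=0$, contradicting $\delta^2 V(Y_u,Y_u) < -\alpha K^V(u,u)\leq 0$, so $\{Z_i\}$ are linearly independent. Using $\mathcal{H}^{n-2}(\text{sing}(\Sigma))=0$ and the standard capacity/logarithmic cutoff construction, I produce smooth $\phi_\epsilon:\text{reg}(\Sigma)\to[0,1]$ vanishing in a neighborhood of $\text{sing}(\Sigma)$, with $\phi_\epsilon\to 1$ pointwise and $\int_\Sigma|\nabla\phi_\epsilon|^2\,d\mathcal{H}^n\to 0$. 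Setting $\tilde Z_i^\epsilon = \phi_\epsilon Z_i \in \Gamma_c(N\text{reg}(\Sigma))$ (linear independence preserved for small $\epsilon$ by smoothness of $Z_i$), it remains to prove, uniformly for $u$ on the unit sphere of $\mathbb{R}^k$,
\[
\delta^2\text{reg}(\Sigma)(\tilde Z_u^\epsilon,\tilde Z_u^\epsilon) + \alpha\int|\tilde Z_u^\epsilon|^2\,d\mathcal{H}^n \;\longrightarrow\; \delta^2 V(Y_u,Y_u) + \alpha K^V(u,u) \;<\;0,
\]
from which compactness of the sphere yields $\lambda_k(\text{reg}(\Sigma))<-\alpha$ for small enough $\epsilon$.

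To establish the displayed convergence, I lift to the ambient: fix a smooth ambient normal extension $\hat Y_u$ of $Z_u$ (so $\delta^2 V(\hat Y_u,\hat Y_u) = \delta^2 V(Y_u,Y_u)$ by the normal-only property), and set $\tilde Y_u^\epsilon = \phi_\epsilon \hat Y_u$ with $\phi_\epsilon$ extended constantly along normals. Then $\delta^2\text{reg}(\Sigma)(\tilde Z_u^\epsilon,\tilde Z_u^\epsilon) = \delta^2 V(\tilde Y_u^\epsilon,\tilde Y_u^\epsilon)$, and the varifold second-variation integrand is polynomial in $\tilde Y_u^\epsilon$ and its ambient first derivatives, which are uniformly bounded on $\text{supp}(\hat Y_u)$; expanding yields a main term $\int\phi_\epsilon^2(\cdot)$ converging to $\delta^2 V(\hat Y_u,\hat Y_u)$ by bounded convergence, plus error terms controlled by $\int|\nabla\phi_\epsilon|^2$ and $(\int|\nabla\phi_\epsilon|^2)^{1/2}$ via Cauchy--Schwarz, all tending to zero. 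The main obstacle is precisely this uniform convergence argument: carrying it out directly on the Jacobi form of $\text{reg}(\Sigma)$ would force one to manage the (possibly divergent) integrals $\int|\nabla^\perp Z_u|^2$ and $\int(|A|^2+\text{Ric}(\nu,\nu))|Z_u|^2$ separately near $\text{sing}(\Sigma)$; routing through the ambient varifold formula — whose integrand is bounded in terms of ambient $C^1$-data of the vector field — cleanly circumvents this, and the assumption $\mathcal{H}^{n-2}(\text{sing}(\Sigma))=0$ is exactly what enables the cutoff construction whose error terms vanish.
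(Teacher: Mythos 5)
Your proposal is correct and follows essentially the same strategy as the paper: in the $(\Leftarrow)$ direction, extend a witnessing $k$-plane of normal fields on $\text{reg}(\Sigma)$ to ambient fields and form the $k$-parameter family by flows; in the $(\Rightarrow)$ direction, take normal projections of the infinitesimal generators $Y_i$ and cut off near $\text{sing}(\Sigma)$ using the $\mathcal{H}^{n-2}$-nullity, exploiting the fact that the varifold second-variation integrand is controlled by ambient $C^1$ data so that the cutoff errors are bounded by $\int(1-\zeta^2) + |\nabla\zeta| + |\nabla\zeta|^2$ and hence by the capacity estimate and the monotonicity formula. Your closing paragraph, explaining why one routes through the ambient formula rather than the intrinsic Jacobi form, is exactly the mechanism the paper uses.

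There is one imprecision worth flagging, though it does not undermine the overall argument. You introduce a \emph{global} ``smooth ambient normal extension $\hat Y_u$ of $Z_u$'' and use the identity $\delta^2 V(\hat Y_u,\hat Y_u)=\delta^2 V(Y_u,Y_u)$ as an intermediate step. Since the normal bundle and the tangential projection $Y_u^\top$ need not extend continuously up to $\text{sing}(\Sigma)$, such a globally defined compactly supported $\hat Y_u$ may not exist; only extensions over compact subsets of $\text{reg}(\Sigma)$ (such as $\text{supp}(\phi_\epsilon)\cap\Sigma$) are available. The paper sidesteps this by keeping $Y_u$ throughout, cutting off first, and only then invoking the smooth-hypersurface normal-only property for the \emph{cut-off} field $\zeta_\delta Y_u$, which is compactly supported in $\text{reg}(\Sigma)$; the passage to $Y_u^\perp$ occurs only in the final inequality using $|Y_u^\perp|\le|Y_u|$ together with the negative sign of the coefficient. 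Your argument is repaired by the same reordering: compare $\delta^2 V(\phi_\epsilon Y_u, \phi_\epsilon Y_u)$ directly with $\delta^2 V(Y_u,Y_u)$, then use the normal-only property on the compactly supported field. Likewise, ``linear independence preserved by smoothness'' is better justified quantitatively, as the paper does, via the uniform lower bound $\int_\Sigma|\zeta_\delta\sum u_i Y_i|^2\ge\theta-C\delta>0$ on the unit sphere.
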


\begin{proof}
We note that 
\begin{equation}
\left. D^2A^{V}\right\vert_0(u,u)= \left. \frac{d^2}{dt^2}\right\vert_0 A^V(tu,tu)= \left. \frac{d^2}{dt^2}\right\vert_0 \|(F_{tu})_\#V\|(U)=\delta^2V(\sum_i u_i Y_i,\sum_i u_i Y_i)\label{Hess A and 2nd var}
\end{equation}

Let $\lambda_k(reg (\Sigma))< -\alpha $. Then there are $k$ linearly independent, compactly supported normal vector-fields on $reg(\Sigma)$ say $X_1,X_2,...,X_k$ \st $$\delta^2V (X, X)< -\alpha \|X\|_{L^2(\Sigma)}^2$$  for any non-zero vector-field $X$ in the span of $\{X_i\}_{i=1}^k$. We extend each $X_i$ to a compactly supported, globally defined vector-field on $U$ and continue to call it by $X_i$; we define 
$ F_v=\Phi^{\sum_i v_iX_i}$
where $\Phi^X$ denotes the time $1$ flow of the vector-field $X$. Let us check that this choice of $F$ indeed works. Clearly, $F_{-v}=F_v^{-1}$.
\\\\
By \ref{Hess A and 2nd var} 
\begin{equation*}
\left. D^2A^{V}\right\vert_0(u,u)= \delta^2V (\sum_i u_iX_i, \sum_i u_iX_i)< -\alpha \|\sum_i u_iX_i\|_{L^2(\Sigma)}^2
\end{equation*}

For the converse, we consider the following.
\begin{equation*}
 \delta^2V(X,X)=\int_\Sigma \left((\text{div}_{\Sigma}\;X)^2+\sum_{i=1}^n \big|(\nabla_{\tau_i}X)^{\perp}\big|^2 - \sum_{i,j=1}^n \langle \tau_i, \nabla_{\tau_j}X\rangle\langle\tau_j , \nabla_{\tau_i}X\rangle \right)d\mathcal{H}^n
\end{equation*}
where $\{\tau_i\}_{i=1}^n$ is an orthonormal basis of $T_x\Sigma$. Since $\mathcal{H}^{n-2}(sing(\Sigma))=0$,
given any $\delta >0$ and $0<\kappa<1$ we can choose balls $\{B(x_i,r_i)\}_{i=1}^K$ \st each $r_i<\kappa$,
$sing(\Sigma)\subset \cup_{i}B(x_i, r_i)$ and $\sum_i r_i^{n-2}< \delta$. \tf $\sum_i r_i^{n-1}< \delta$ 
and $\sum_i r_i^{n}< \delta$ as well. We choose smooth cut-off functions $0\leq\zeta_i\leq 1$ on $U$ \st 
\begin{equation*}
\zeta_i =
 \begin{cases}
  0 &\text{on } B(x_i,r_i)
  \\
  1 &\text{outside } B(x_i, 2r_i)
 \end{cases} 
\end{equation*}
and $|\nabla \zeta_i|\leq 2/r_i$ (this can be ensured by choosing $\kappa$ sufficiently small). Let $\zeta_{\delta}=\min_i \zeta_i$. From the second variation formula, we see that \footnote{Though $\zeta_{\delta}$ is only a Lipschitz continuous function, its use in the subsequent calculations can be justified by an approximation argument. }
\begin{equation}
\bigg |\delta^2V(X,X)-\delta^2V(\zeta_{\delta}X,\zeta_{\delta}X) \bigg| \leq \int_{\Sigma} \left((1-\zeta_{\delta}^2)+|\nabla \zeta_{\delta}|+ |\nabla \zeta_{\delta}|^2 \right)f(X, \nabla X)d\mathcal{H}^n \label{cut off error}
\end{equation}

where $f$ is an expression involving $X$ and $\nabla X$. By the monotonicity formula, the R.H.S. of this equation is bounded by $C\delta$ for some constant $C$ depending only on $(U,g), n, \|V\|(U)$ and $\|X\|_{C^1}$. 
\\\\
Therefor, for $u \in \mathbb{R}^k$ with $\|u\|=1$
\begin{align}
\delta^2V \left(\zeta_{\delta}\sum_i u_i Y_i^{\perp}, \zeta_{\delta}\sum_i u_i Y_i^{\perp}\right) 
& = \delta^2V\left(\zeta_{\delta}\sum_i u_iY_i, \zeta_{\delta}\sum_i u_iY_i\right)\\
& \leq \delta^2V\left(\sum_i u_iY_i, \sum_i u_iY_i \right)+C\delta 
\label{cor to cut off error}
\end{align}
Here $C$ depends only on $(U,g), n, \|V\|(U)$ and $\|Y_i\|_{C^1}$ and not on $u$.
\\\\
We assume $\lambda_k(V) < -\alpha$. By Definition \ref{Def of jacobi eigenvalue} and equation \ref{Hess A and 2nd var} for all $u \in \bbr^k$ with $\|u\|=1$, 
\begin{equation}
\delta^2V(\sum _i u_iY_i, \sum _i u_iY_i)<-\alpha \|\sum _i u_iY_i\|_{L^2(\Sigma)}^2\; \Longrightarrow \sum _i u_iY_i \neq 0
\label{eqn linear independence of Y_i}
\end{equation}
\tf 
\begin{equation}
\sup_{\|u\|=1} \frac{\delta^2V(\sum _i u_iY_i, \sum _i u_iY_i)}{\|\sum _i u_iY_i\|_{L^2}^2}\leq -\alpha-2\varepsilon
\end{equation}
for some $\varepsilon >0$. Hence, using \ref{cor to cut off error} for all $\|u\|=1$ and $\delta$ sufficiently small
\begin{align}
&\delta^2V\left(\zeta_{\delta}\sum_i u_iY_i^{\perp}, \zeta_{\delta}\sum_i u_iY_i^{\perp}\right)\nonumber\\
& \leq \delta^2V \left(\sum_i u_iY_i, \sum_i u_iY_i\right)+C\delta \nonumber\\
& \leq (-\alpha-2\varepsilon)\int_{\Sigma}\big|\sum _i u_iY_i\big|^2 d\mathcal{H}^n +C\delta \nonumber\\
& \leq (-\alpha-2\varepsilon)\int_{\Sigma}\big|\sum _i u_iY_i\zeta_{\delta}\big|^2 d\mathcal{H}^n +C\delta \nonumber\\
& < (-\alpha-\varepsilon)\int_{\Sigma}\big|\sum _i u_iY_i\zeta_{\delta}\big|^2 d\mathcal{H}^n \label{justify}\\
& \leq (-\alpha-\varepsilon)\bigg\|\sum_{i}u_iY_i^{\perp}\zeta_{\delta}\bigg\|_{L^2(\Sigma)}^2 \label{index penultimate}
\end{align}
To justify \ref{justify}, we observe:
\begin{align*}
& \int_{\Sigma}\big|\sum _i u_iY_i\zeta_{\delta}\big|^2 d\mathcal{H}^n
= \int_{\Sigma}\big|\sum _i u_iY_i\big|^2 d\mathcal{H}^n + \int_{\Sigma}(\zeta_{\delta}^2-1)\big|\sum _i u_iY_i\big|^2 d\mathcal{H}^n \\
& \geq \inf_{\|u\|=1}\int_{\Sigma}\big|\sum _i u_iY_i\big|^2 d\mathcal{H}^n - C \sum_{i}r_i^n \;\; (\text {using monotonicity formula}) \\
& \geq \theta -C\delta \;\; (\text{for some }  \theta>0 \text{ by } \ref{eqn linear independence of Y_i} \text{ and } C \text{ is independent of }u)
\end{align*}

Clearly \ref{index penultimate} implies that $\{\zeta_{\delta}Y_i^{\perp}\}_{i=1}^k$ are linearly independent normal vector fields on $reg(\Sigma)$ and $\lambda_k(reg(\Sigma))< -\alpha$
\end{proof}

\medskip

In view of the above Theorem \ref{equiv to smooth}, we will use the terms $\lambda_k(V)$ and $\lambda_k(reg(\Sigma))$ interchangeably.


\section {Modifications of the results of Schoen-Simon \cite{SS} }

Suppose the unit ball $B^{n+1}(0,1) \subset \bbr^{n+1}$ is equipped with a Riemannian \mt $g$; $\mu_1$ is a constant \st if $g=g_{ij}dx^idx^j$   
\begin{equation}
\sup_{B^{n+1}(0,1)}\left |\frac{\partial g_{ij}}{\partial x_k}\right|\leq \mu_1 
\qquad
\sup_{B^{n+1}(0,1)} \left|\frac{\partial^2 g_{ij}}{\partial x_k \partial x_l}\right|\leq \mu_1^2
\label{C^2 bound on metric}
\end{equation}
\bigskip
\begin{theorem}[Modification of Schoen-Simon \cite{SS}, Theorem 1 ; page 747] Suppose $\Sigma$ is a singular, minimal hypersurface in $(B^{n+1}(0,1),g)$ satisfying\\ $\mathcal{H}^{n-2}(sing(\Sigma))=0$, $\mathcal{H}^n(\Sigma)\leq \mu $ and $\lambda_1(|\Sigma|)\geq -\alpha$ for some $\alpha \geq 0$. Then there exist $\delta_0 \in (0,1)$, $r_0 \in (0, 1/4)$ and $c>0$  depending only on $n, \mu, \mu_1, \alpha$ \st the following holds. If $ x=(x',x_{n+1})\in \Sigma \cap B^{n+1}(0,1/4) $, $\rho \leq r_0$, $\Sigma'$ is the connected component of $\Sigma \cap C(x, \rho)$ \textup{(}$C(x,\rho)$ is the cylinder on $B^n(x, \rho)$\textup{)} containing $x$ and 
	$$\sup_{y= (y', y_{n+1})\in \Sigma'}|y_{n+1}-x_{n+1}|\leq \delta_0 \rho \qquad(*)$$
	
	then, $\Sigma'\cap C(x,\rho/2 )$ consists of disjoint union of graphs of functions $u_1 < u_2 < ... < u_k$ defined on $B^n(x, \rho/2)$ satisfying the following estimate.
	
	$$\sup_{B^n(x,\frac{\rho}{2})}(|Du_i|+\rho |DDu_i|)\leq c\delta_0$$
	
	for $i = 1,2,..., k$. In particular, $\Sigma$ is smooth near $x$ and the second fundamental form $|A_\Sigma(x)|\leq c/\rho$ (for possibly a different constant $c$). \label{modification of SS thm-1}
\end{theorem}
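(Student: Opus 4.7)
The plan is to reduce to Schoen-Simon's original Theorem 1 on p.\ 747 by converting the varifold-level bound $\lambda_1(|\Sigma|) \geq -\alpha$ into a weakly stable inequality on $\text{reg}(\Sigma)$ whose extra zero-order term is absorbable on small scales.

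First, I would apply Theorem \ref{equiv to smooth} to translate $\lambda_1(|\Sigma|) \geq -\alpha$ into the smooth statement $\lambda_1(\text{reg}(\Sigma)) \geq -\alpha$. Testing this against a normal variation $X = \phi \nu$ (where $\phi \in C_c^1(\text{reg}(\Sigma))$ and $\nu$ is a local unit normal, which exists because an embedded hypersurface is locally two-sided) and using the second-variation formula for a minimal hypersurface together with the ambient bound $|\text{Ric}_g| \leq C(\mu_1)$ coming from (\ref{C^2 bound on metric}), I obtain the weakly stable inequality
\[\int_\Sigma |A_\Sigma|^2 \phi^2 \, d\mathcal{H}^n \leq \int_\Sigma |\nabla^\Sigma \phi|^2 \, d\mathcal{H}^n + \Lambda_0 \int_\Sigma \phi^2 \, d\mathcal{H}^n, \qquad \Lambda_0 = \Lambda_0(\alpha, \mu_1).\]
The hypothesis $\mathcal{H}^{n-2}(\text{sing}(\Sigma)) = 0$ enters exactly as in the proof of Theorem \ref{equiv to smooth}: one inserts a cut-off $\zeta_\delta$ vanishing near $\text{sing}(\Sigma)$ into $\phi$, controls the error by $O(\delta)$ via the monotonicity formula, and lets $\delta \to 0$.

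Second, I would feed this weakly stable inequality into the Schoen-Simon argument. Their original Theorem 1 is stated for stable hypersurfaces ($\Lambda_0 = 0$), but its engine is a Simons-identity-based iteration producing an $L^{2+2q}$ estimate for $|A_\Sigma|$, from which the graphical decomposition follows under a height hypothesis. The extra $\Lambda_0 \int \phi^2$ term is a lower-order perturbation that scales favorably: upon rescaling a ball of radius $\rho$ to unit size, $\int |\nabla \phi|^2$ is invariant while $\int \phi^2$ acquires a factor $\rho^2$, so the perturbation on the unit-scale problem has size $\rho^2 \Lambda_0 \int \phi^2$. Choosing $r_0 = r_0(n, \mu, \mu_1, \alpha)$ so that $r_0^2 \Lambda_0$ lies below the thresholds controlling the iteration, the same argument delivers the pointwise bound $|A_\Sigma(x)| \leq c/\rho$ on $\Sigma' \cap C(x, \rho/2)$ and the graphical decomposition $u_1 < \cdots < u_k$ with $|Du_i| + \rho|DDu_i| \leq c\delta_0$ as claimed.

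The main obstacle is this second step: one must trace carefully through Schoen-Simon's proof to confirm that the extra $\Lambda_0 \int \phi^2$ term propagates through the Simons-identity iteration without breaking the convergence of the estimates. The scaling argument makes this plausible, and the outcome is that all of Schoen-Simon's constants acquire a continuous dependence on $\Lambda_0$, producing the dependence of $\delta_0, r_0, c$ on $\alpha$ as stated.
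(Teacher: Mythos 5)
Your approach is essentially the same as the paper's. The paper disposes of this theorem even more tersely, via the observation that Schoen-Simon's stability inequality $(1.17)$ already carries a nonzero zeroth-order constant $c_5$ (coming from the ambient Ricci curvature bound $\mu_1$), so replacing stability by $\lambda_1(|\Sigma|)\geq -\alpha$ merely replaces $c_5$ with $c_5 + \alpha/\mu_1^2$ and all of Schoen-Simon's subsequent estimates go through with the new constant. This makes your second step --- worrying whether the $\Lambda_0\int\phi^2$ perturbation survives the Simons-identity iteration and choosing $r_0$ to make $r_0^2\Lambda_0$ small --- unnecessary in the sense that the Schoen-Simon machinery is already built to handle such a term; you only inherit a dependence of $\delta_0, r_0, c$ on $\alpha$ through the enlarged constant, exactly as the paper states. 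Your first step (passing from $\lambda_1(|\Sigma|)\geq-\alpha$ to the inequality on $\text{reg}(\Sigma)$ via Theorem \ref{equiv to smooth} and the $\mathcal{H}^{n-2}(\text{sing}(\Sigma))=0$ cut-off argument) is correct and is indeed the content that Theorem \ref{equiv to smooth} was set up to provide.
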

\medskip
\begin{remark}
	The difference between the above Theorem \ref{modification of SS thm-1} and Theorem 1 of Schoen-Simon \cite{SS} is that instead of assuming $|\Sigma|$ is stable we have assumed that $\lambda_1(|\Sigma|)\geq -\alpha$. Indeed under this weaker assumption, the stability inequality $(1.17)$ of Schoen-Simon \cite{SS} (page 746) continues to hold with the constant $c_5$ replaced by $c_5+\frac{\alpha}{\mu_1^2}$; therefore all the successive calculations in the paper \cite{SS} go through. 
\end{remark}
\bigskip
\begin{theorem} [Modification of Schoen-Simon \cite{SS}, Theorem 2; page 784]
	
	Let $\{\Sigma_q\}$ be a sequence of singular, minimal hypersurfaces in $(B^{n+1}(0,1),g)$ \st $ \mathcal{H}^{n-2}(sing(\Sigma_q))=0$, $\lambda_1(|\Sigma_q|)\geq -\alpha$ and $|\Sigma_q|$ converges to a varifold $W$; $0 \in \text{spt}(W)=\Sigma$. Then $\mathcal{H}^{s}\left(sing(\Sigma)\cap B^{n+1}(0, \frac{1}{2})\right)=0$ for all $ s > n-7$.
	\label{modification of SS thm-2}
	
\end{theorem}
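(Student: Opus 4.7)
The strategy is to adapt the proof of Theorem 2 of Schoen-Simon \cite{SS}: combine the $\varepsilon$-regularity result Theorem \ref{modification of SS thm-1} with a Federer-Almgren dimension-reduction argument, and verify that the weaker hypothesis $\lambda_1(|\Sigma_q|) \geq -\alpha$ in place of stability propagates through both the limiting procedure and the blow-up to tangent cones. First, by Allard's compactness theorem, $W$ is a stationary integral $n$-varifold with locally bounded mass, and by the semicontinuity in Remark \ref{semicontinuity of eigenvalue} one has $\lambda_1(W) \geq -\alpha$ as well; in particular $\Sigma = \text{spt}(W)$ is a singular, stationary hypersurface in the sense defined in Section 2.

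The heart of the argument is a Federer-Almgren dimension reduction on $\text{sing}(\Sigma) \cap B^{n+1}(0, 1/2)$. Suppose, for contradiction, that $\mathcal{H}^{s}(\text{sing}(\Sigma) \cap B^{n+1}(0, 1/2)) > 0$ for some $s > n-7$. Then at a suitably chosen density point $x_0 \in \text{sing}(\Sigma)$ and along scales $r_j \to 0$, a diagonal argument produces a non-flat tangent cone $C$ of $\Sigma$ at $x_0$, realized as the weak limit of rescalings $(\eta_{x_0, r_j})_{\#}|\Sigma_{q_j}|$ of the original sequence, with $\mathcal{H}^s(\text{sing}(C)) > 0$. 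Under such a rescaling, the ambient metric $r_j^{-2}g(x_0 + r_j\,\cdot)$ converges smoothly to the flat Euclidean metric on $\bbr^{n+1}$, while the stability-type inequality for $\Sigma_{q_j}$ acquires a factor of $r_j^2$ in front of $\alpha$. Passing to the limit, $|C|$ therefore satisfies the classical stability inequality in $\bbr^{n+1}$.

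Once every tangent cone of $\Sigma$ is a stable minimal hypercone in Euclidean space, the argument closes by invoking the classification of such cones: when $n \leq 6$, the only stable minimal hypercone is the hyperplane (Simons' theorem together with \cite{SS}), whose singular set is empty. Combined with Theorem \ref{modification of SS thm-1} (which provides smoothness for the sequence wherever the small-oscillation hypothesis $(*)$ is satisfied), iteration of this dimension reduction yields $\mathcal{H}^{s}(\text{sing}(\Sigma) \cap B^{n+1}(0, 1/2)) = 0$ for all $s > n-7$, a contradiction.

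The main obstacle is the careful verification that every quantitative estimate from \cite{SS} that was originally derived from the stability inequality (1.17) remains valid after the replacement $c_5 \leadsto c_5 + \alpha/\mu_1^2$ noted in the remark following Theorem \ref{modification of SS thm-1}. The point is that this substitution affects only intermediate constants but not the scaling structure essential for dimension reduction: since $\alpha$ is a fixed constant, the correction $\alpha r_j^2 \to 0$ under blow-up by $r_j \to 0$, so tangent cones are genuinely stable and the Euclidean classification applies. This scale invariance of the limiting picture is what makes the generalization from \textit{stable} to $\lambda_1 \geq -\alpha$ essentially cost-free.
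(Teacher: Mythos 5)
Your proposal captures the same strategy as the paper and identifies the two essential points: that all estimates of Schoen--Simon drawn from their stability inequality $(1.17)$ and from their Theorem~1 survive the weakening from stability to $\lambda_1 \geq -\alpha$, and that under the blow-up used in the Federer-type dimension reduction the perturbation $-\alpha$ gets multiplied by a factor tending to zero, so the limiting cone is genuinely stable and the classical classification of stable minimal hypercones applies.

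One small difference in execution is worth flagging. You establish stability of the limit cone by ``passing to the limit in the stability inequality,'' which implicitly requires that the $|A|^2$ term behaves well under varifold convergence (Schoen--Simon's original argument takes this route, using the curvature estimates from their Theorem~1 to get local smooth convergence away from a small set). The paper instead invokes the upper semicontinuity of $\lambda_1$ under $\textbf{F}$-convergence (Remark~\ref{semicontinuity of eigenvalue}): if $\lambda_1(\mathbb{R}^l\times W_l)<-\varepsilon$ held, then for large $m$ the rescaled varifolds $J_\#\circ\tau_{y_m\#}\circ\mu_{r_m\#}|\Sigma_{q_m}|$ would also satisfy $\lambda_1<-\varepsilon$, contradicting the lower bound $-\alpha/r_m \to 0$ from \eqref{eval conv to 0}. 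The semicontinuity route is a slight streamlining that avoids any discussion of curvature convergence; the two are otherwise interchangeable. Also, your remark that $\Sigma=\text{spt}(W)$ is ``a singular, stationary hypersurface in the sense defined in Section~2'' is mildly circular, since that definition presupposes $\mathcal{H}^{n-1}(\text{sing}(\Sigma))=0$, which is part of what is being proved; but this does not affect the argument, as Schoen--Simon's machinery applies to the varifold limit directly.
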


\begin{proof} 
	As before, The difference between the above Theorem \ref{modification of SS thm-2} and Theorem 2 of Schoen-Simon \cite{SS} is that instead of assuming $|\Sigma|$ is stable we have assumed that $\lambda_1(|\Sigma|)\geq -\alpha$.
	\\\\
	The proof of Theorem 2 of Schoen-Simon \cite{SS} goes as follows. By the successive blow-up argument, one arrives at a varifold $W_l$ which is a stationary, integral, codimension $1$ cone in $\mathbb{R}^{n-l+1}$ \st $sing(W_l)= \{0\}$ and 
	
	\begin{equation}
	\mathbb{R}^l \times W_l = \lim_{m \rightarrow \infty} J_{\#}\circ \tau_{y_m \#}\circ \mu_{r_m \#}|\Sigma_{q_m}| \label{tgt cone}
	\end{equation} 
	for some sequence of points $y_m \in \bbr^{n+1}$ and positive real numbers $r_m$ and some subsequence $ \{\Sigma_{q_m}\} \subset \{\Sigma_q\}$; $|y_m|, r_m \rightarrow \infty$. Here $J$ is some orthogonal transformation of $\mathbb{R}^{n+1}$, $\tau_y$ denotes the translation of $\mathbb{R}^{n+1}$ which brings $y$ to the origin, $\mu_r$ is the multiplication (scaling) by $r$. It is shown that (equation (5.22) of Schoen-Simon) $l \geq s$ for every $ s $ \st $\mathcal{H}^{s}\left(sing(\Sigma)\cap B^{n+1}(0, \frac{1}{2})\right)>0$ (hence, one only needs to show that $l\leq n-7$) 
	and $l\leq n-3$. Upto this point, the only facts about $\Sigma_{q}$ which are used : $\Sigma_q$ satisfies the stability inequality $(1.17)$ and Theorem 1 of Schoen-Simon \cite{SS}. After this, 
	stability of $|\Sigma_q|$ is used to conclude that $\mathbb{R}^l\times W_l$ is stable. Hence, $W_l$ is a smooth, stable codimension 1 cone in $\mathbb{R}^{n-l+1}$ with a singularity at origin. \tf $n-l \geq 7\;$  i.e. $l\leq n-7$.
	\\\\
	In our context of Theorem \ref{modification of SS thm-2} the above mentioned proof can be modified as follows. As noted above, stability inequality $(1.17)$ and Theorem 1 of Schoen-Simon \cite{SS} continue to hold under the weaker assumption $\lambda_1(|\Sigma_q|)\geq -\alpha$. Moreover
	\begin{equation}
	\lambda_1(|\Sigma_q|)\geq -\alpha \; \Longrightarrow
	\lambda_1\left(J_{\#}\circ \tau_{y_m \#}\circ \mu_{r_m \#}|\Sigma_{q_m}|\right)\geq \frac{-\alpha}{r_m}
	\label{eval conv to 0}
	\end{equation}
	
	\ref{eval conv to 0} can be justified, for example, using Theorem \ref{equiv to smooth}. If $\lambda_1(\mathbb{R}^l \times W_l)< 0$ then $\lambda_1(\mathbb{R}^l \times W_l)< -\varepsilon$ for some $\varepsilon>0$. (This can be seen from the proof of Theorem \ref{equiv to smooth}.) Since $\mathbb{R}^l \times W_l$ is the varifold limit of $J_{\#}\circ \tau_{y_m \#}\circ \mu_{r_m \#}|\Sigma_{q_m}|$, in view of Remark \ref{semicontinuity of eigenvalue}, for all large $m$, 
	$$\lambda_1\left(J_{\#}\circ \tau_{y_m \#}\circ \mu_{r_m \#}|\Sigma_{q_m}|\right) < -\varepsilon$$
	This contradicts \ref{eval conv to 0} as $\lim_{m \rightarrow \infty}r_m = \infty$. Hence, $\lambda_1(\mathbb{R}^l \times W_l)\geq 0$ i.e. $\mathbb{R}^l \times W_l$ is stable.
\end{proof}
\medskip
To prove the graphical convergence part of Theorem \ref{main theorem to prove} we will need the following Lemma which is a consequence of Theorem \ref{modification of SS thm-1}. 
\\
\begin{lemma}
	Let $(N^{n+1},g)$ be a closed Riemannian manifold. Let $\{\Sigma_{q}\}$ be a sequence of singular, minimal hypersurfaces in $N$ with $\mathcal{H}^{n-2}(sing(\Sigma_{q}))=0$ for all $q$. We also assume that $W_q = |\Sigma_{q}|$ varifold converges to a stationary, integral varifold $W$, $\Sigma = spt(W)$ and $\Sigma_{q}$ converges to $\Sigma$ in the Hausdorff topology. Then,  for every $x_0 \in reg(\Sigma)$ \te $R=R(x_0)>0$ \st if $r\leq R$ and $\lambda_1(W_q \mres B^N(x_0,r)) \geq -\alpha$ for all $q$ sufficiently large then $\Sigma_q \cap B^N(x_0,r/4)$ is smooth for large $q$ and $\Sigma_q$ converges to $\Sigma$ smoothly in the ball $B^N(x_0,r/5)$.
	\label{modified SS smooth convergence}
\end{lemma}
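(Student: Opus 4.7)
My plan is to work in normal coordinates around $x_0$ in which $\Sigma$ appears as a low-slope graph, use Hausdorff convergence to confine $\Sigma_q$ to a thin neighborhood of this graph, and apply the modified Schoen-Simon theorem (Theorem \ref{modification of SS thm-1}) at each point of $\Sigma_q$ near $x_0$ to extract uniform smoothness and curvature bounds; smooth convergence will then follow from standard elliptic regularity for the minimal surface equation.

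To set up, I would choose a normal coordinate chart $\phi : B^N(x_0, R_0) \to B^{n+1}(0, R_0)$ with $\phi(x_0) = 0$ and $T_{x_0}\Sigma$ mapped to $\mathbb{R}^n \times \{0\}$, so that the pullback metric satisfies \ref{C^2 bound on metric} with some $\mu_1$. Since $x_0 \in reg(\Sigma)$, after shrinking $R_0$ one may express $\Sigma \cap C(0, R_0)$ as the graph of a smooth $u : B^n(0, R_0) \to \mathbb{R}$ with $u(0) = 0$, $Du(0) = 0$, and $|DDu| \leq M$ for some $M$. Varifold convergence provides a bound $\mu$ on $\mathcal{H}^n(\Sigma_q \cap B^N(x_0, R_0))$ for $q$ large. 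Let $\delta_0, r_0, c$ be the constants from Theorem \ref{modification of SS thm-1} associated to $(n, \mu, \mu_1, \alpha)$, and pick $R \in (0, R_0]$ with $R \leq r_0$ and $MR \leq \delta_0/4$.

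Now fix $r \leq R$ and suppose $\lambda_1(W_q \mres B^N(x_0, r)) \geq -\alpha$ for $q$ large; by Hausdorff convergence, there are $\epsilon_q \to 0$ such that $\Sigma_q \cap B^N(x_0, R)$ lies within $\epsilon_q$ of $\mathrm{graph}(u)$. Given $x = (x', x_{n+1}) \in \Sigma_q \cap B^{n+1}(0, r/4)$, let $\Sigma_q'$ denote the connected component of $\Sigma_q \cap C(x, r/2)$ through $x$. Any $y \in \Sigma_q'$ has $|y'| \leq 3r/4 \leq R$, whence $|y_{n+1} - u(y')| \leq \epsilon_q$ and $|u(y')| \leq M|y'|^2/2$; combining with the analogous estimate at $x$ gives $|y_{n+1} - x_{n+1}| \leq 5Mr^2/16 + 2\epsilon_q \leq \delta_0 (r/2)$ once $q$ is large. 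Applying Theorem \ref{modification of SS thm-1} with $\rho = r/2$ then forces $\Sigma_q' \cap C(x, r/4)$ to be a union of smooth graphs over $B^n(x', r/4)$ with bounded slope and $|A_{\Sigma_q}(x)| \leq 2c/r$; varying $x$ shows $\Sigma_q \cap B^N(x_0, r/4)$ is smooth.

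The uniform $C^2$-control of the graph functions, combined with Hausdorff convergence to $\Sigma$ and standard elliptic regularity for the minimal surface equation (upgrading to uniform $C^k$ bounds for every $k$), then delivers smooth convergence $\Sigma_q \to \Sigma$ on $B^N(x_0, r/5)$ via Arzela-Ascoli. The main technical subtlety is verifying the height condition (*) uniformly in $x$ and $q$, which is exactly where the graphical description of $\Sigma$ at $x_0$ with $u(0) = Du(0) = 0$ is essential: it makes $|u|$ quadratic in $|y'|$, so on scale $r$ the combined height contribution from $u$ and from the Hausdorff error $\epsilon_q$ stays below $\delta_0 r/2$ once $R$ and $q^{-1}$ are sufficiently small.
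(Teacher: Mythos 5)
Your overall strategy (confine $\Sigma_q$ to a thin slab around $\Sigma$ using Hausdorff convergence, verify the height bound $(*)$, invoke Theorem \ref{modification of SS thm-1}, then upgrade via elliptic estimates and Arzela-Ascoli) is the right outline and matches the paper's. But there is a genuine gap in how you feed the eigenvalue hypothesis into Theorem \ref{modification of SS thm-1}, and it traces back to omitting the rescaling step that the paper performs.

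Theorem \ref{modification of SS thm-1} takes as input a hypersurface in $(B^{n+1}(0,1),g)$ satisfying $\lambda_1(|\Sigma|)\geq -\alpha$ \emph{on that unit ball}. In your setup you map $B^N(x_0,R_0)$ onto a fixed coordinate ball and then apply the theorem (with $\rho=r/2$) to $\Sigma_q$ viewed in that fixed chart. However, your hypothesis only gives $\lambda_1(W_q\mres B^N(x_0,r))\geq -\alpha$ with $r\leq R\leq r_0\ll R_0$. With the paper's definition of $\lambda_1$ via families of diffeomorphisms supported in an open set, a lower bound on the small ball $B^N(x_0,r)$ does \emph{not} imply a lower bound on the larger chart ball: enlarging the domain only allows more competitors, so the first eigenvalue can only go down. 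Hence the hypothesis of Theorem \ref{modification of SS thm-1} is not met in your application, and you cannot invoke it as stated. (One could try to argue that the Schoen--Simon stability inequality is applied only to test functions supported in cylinders $C(x,r/2)$ that, after the height confinement, lie inside $B^N(x_0,r)$ — but that is a nontrivial locality claim about the internals of Schoen--Simon's proof, and you do not argue it.)

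The paper sidesteps this by rescaling: it applies $\Phi_r$ so that $B^N(x_0,r)$ becomes $B^{n+1}(0,1)$ with metric $g_r$. Then the ball on which the eigenvalue hypothesis is assumed coincides exactly with the ball on which the theorem requires it, and moreover the scaling is favorable — $\lambda_1$ of the rescaled varifold is $\geq -\alpha r\geq -\alpha$, while $\mu_1(g_r)\leq 1$ for $r$ small. The price is that the relevant area bound $\mu$ becomes the \emph{density ratio} $\mathcal H^n(\Sigma_q\cap B^N(x_0,r))/r^n$, which your proof also does not control: you only record a bound on the total mass in the chart ball, whereas after rescaling what matters is the ratio, and the paper controls it by picking $\rho_0$ so that the density of $W$ at $x_0$ plus a margin bounds all the relevant ratios. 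So two things need to be repaired: (1) rescale by $\Phi_r$ before invoking Theorem \ref{modification of SS thm-1}, so that the eigenvalue hypothesis and the theorem's domain line up; and (2) replace the naive mass bound by a density-ratio bound valid uniformly for small $t\leq\rho_0$ and $q$ large. Once those are in place, your verification of the oscillation bound $(*)$ (using $u(0)=Du(0)=0$ and the Hausdorff error $\epsilon_q\to 0$) and the subsequent Arzela--Ascoli argument are essentially the same as in the paper.
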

\begin{proof}
	Let $ \Theta =\lim_{t \rightarrow 0} \frac{\|W\|(B^N(x_0,t))}{t^n}$ . (The limit exists since $W$ is stationary.) Let $\rho_0$ be \st
	\begin{equation}
	\frac{\|W\|(B^N(x_0,t))}{t^n}\leq \Theta +1/2 \qquad \forall t\leq \rho_0
	\label{density bound}
	\end{equation}
	Let $\delta_0$ and $r_0$ be the constants which are provided by Theorem \ref{modification of SS thm-1} when we set $\mu = \Theta +1$ and $\mu_1 = 1$. Let $s_0 = \frac{\delta_0 r_0}{2}$. For $0<a<\frac{1}{2}\text{inj}_N$, identifying $\bbr^{n+1}$ with $T_{x_0}N$ (and $\bbr^n \times \{0\} \subset \bbr^{n+1}$ is identified with $ T_{x_0}M$) we define
	$$\Phi_a : B^{n+1}(0,2) \longrightarrow B^N(x_0,2a), \quad \Phi_a(v)= \text{ exp}_{x_0}(av), \quad g_a=\Phi_a^*g $$
	
	We can choose $R>0$ so that 
	\medskip
	\begin{itemize}
		\item $R < \min\left\lbrace\frac{1}{2}\text{inj}_N, \frac{1}{2}d^N(x_0,sing(\Sigma)), 1, \rho_0\right\rbrace$
	\end{itemize} 
	\medskip 
	and whenever $r\leq R$
	\medskip
	\begin{itemize}	
		\item $(B^{n+1}(0,1),g_r)$ satisfies \ref{C^2 bound on metric} with $\mu_1 = 1$ \label{mu_1 = 1}
		\item $\tilde{\Sigma}=\Phi_r^{-1}(\Sigma \cap B^N(x_0,2r)) \subset B^{n+1}(0,2) \cap \{x:|x_{n+1}| < s_0\}$ \footnote{This is possible since $\Sigma$ is smooth near $x_0$}
	\end{itemize}
	\medskip
	We will show that the above choice of $R= R(x_0)$ works. Let us fix an $r \leq R$. We define 
	$$C=B^n(0,1) \times [-1,1] \subset B^{n+1}(0,2), \qquad C'= C \setminus \left\{x:|x_{n+1}| < s_0\right\}.$$
	Then 
	$$C' \cap \left(\tilde{\Sigma} \cup \partial  B^{n+1}(0,2)\right) = \emptyset; \qquad d:= \text{dist}_{g_r}\left(C', \left(\tilde{\Sigma} \cup \partial  B^{n+1}(0,2)\right)\right)$$
	We choose $q_0$ (depending on $r$) so that for all $q \geq q_0$
	\medskip
	\begin{itemize}
		\item $0<\frac{\|W_q\|(B^N(x_0,r))}{r^n}\leq \Theta +1$ \footnote{This is possible because of \ref{density bound} and varifold convergence of $W_q$ to $W$}
		\item $\lambda_1(W_q \mres B^N(x_0,r)) \geq -\alpha$
		\item The Hausdorff distance $d_H(\Sigma_q, \Sigma) < dr$
	\end{itemize}
	\medskip
	Hence, denoting $\tilde{\Sigma}_q=\Phi_r^{-1}(\Sigma_q \cap B^N(x_0,2r))$, for all $q \geq q_0$ we have $\tilde{\Sigma}_q\cap B^{n+1}(0,1) \neq \emptyset$, 
	
	$$ \mathcal{H}^n(\tilde{\Sigma}_q\cap B^{n+1}(0,1))= \frac{\mathcal{H}^n\left(\Sigma_q \cap B^N(x_0,r)\right)}{r^n}\leq \Theta +1, $$
	
	$$ \lambda_1(|\tilde{\Sigma}_q\cap B^{n+1}(0,1)|) \geq -\alpha r\geq -\alpha$$
	
	and
	\begin{equation}
	\tilde{\Sigma}_q \cap C' = \emptyset \quad \text{i.e.} \quad (\tilde{\Sigma}_q \cap C ) \subset C \cap \left\{x:|x_{n+1}| < s_0\right\}\footnote{This follows from $d_H(\Sigma_q, \Sigma) < dr$}
	\label{tilde{Sigma}_q is trapped}
	\end{equation}
	
	We can now apply Theorem \ref{modification of SS thm-1} to the singular, minimal hypersurface $$\left(\tilde{\Sigma}_q \cap B^{n+1}(0,1)\right) \subset \left(B^{n+1}(0,1), g_r\right)$$ 
	for $q \geq q_0$. Since, $B^{n+1}(0,1) \subset C$,
	equation \ref{tilde{Sigma}_q is trapped} implies that for all $q \geq q_0$, $x \in \tilde{\Sigma}_q \cap B^{n+1}(0,1/4)$ and for $\rho = r_0$ the oscillation bound (*) of Theorem \ref{modification of SS thm-1} is satisfied in the cylinder $C(x, r_0)$. Hence, $\tilde{\Sigma}_q \cap B^{n+1}(0,1/4)$ is smooth with uniform bound on $A_{\tilde{\Sigma}_q \cap B^{n+1}(0,1/4)}$ and we have already noted above a uniform upper bound on $\mathcal{H}^n(\tilde{\Sigma}_q \cap B^{n+1}(0,1)) $. The counterpart of the Arzela-Ascoli theorem for smooth, minimal hypersurface gives that in a smaller ball $B^{n+1}(0,1/5)$, $\tilde{\Sigma}_q$ converges to $\tilde{\Sigma}$ smoothly and graphically. (Here we do not have to pass to a further subsequence since we already know that  $\tilde{\Sigma}_q$ Hausdorff converges to $\tilde{\Sigma}$). When we rescale back to go back to $N$, we get that in the ball $B^N(x_0, r/5)$, $\Sigma_q$ converges to $\Sigma$ smoothly and graphically.
	
\end{proof}


\section{Proof of the main theorem}

In this section we will give a proof of Theorem \ref{main theorem to prove}.
\\\\
By Allard’s compactness
theorem \cite{Allard}, possibly after passing to a subsequence, $V_k \longrightarrow V$ in the $\textbf{F}$ metric with $V$ is a stationary, integral varifold and $\|V\|(N)\leq \Lambda$. If $\lambda_p(V)<-\alpha$ then $\lambda_p(V_k)<-\alpha$ for $k$ sufficiently large by Remark \ref{semicontinuity of eigenvalue}. Hence, $\lambda_p(V)\geq -\alpha$. We also know that $\text{spt}(V_k)=M_k$ converges to $\text{spt}(V)=M$ in the Hausdorff topology.
\\
\begin{lemma}
	Let $W$ be a stationary varifold in $U^{n+1}$ with $\lambda_p(W)\geq -\alpha$, $\alpha \geq 0$. Let $U_1, ..., U_p$ be mutually disjoint open subsets of $U$ \st $\|W\|(U_j) \neq 0$ for each $j=1,...,p$. Then \te $i \in \{1,...,p\}$ \st $\lambda_1(W \mres U_i ) \geq -\alpha$.
	\label{lemma disjoint open sets}
\end{lemma}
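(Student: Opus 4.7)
The plan is to argue by contradiction. Assume that $\lambda_1(W \mres U_i) < -\alpha$ for every $i \in \{1,\dots,p\}$. Using the hypothesis $\|W\|(U_i) \neq 0$ and the observation that $W \mres U_i$ is itself a stationary varifold in the open set $U_i$ (since test vector fields compactly supported in $U_i$ extend by zero), Definition \ref{Def of jacobi eigenvalue} produces, for each $i$, a $1$-parameter family of diffeomorphisms $F^i : \overline{B}^1(0,1) \to \mathrm{Diff}(U_i)$, supported in some $U_i' \subset\subset U_i$, such that
\[
D^2 A^{W \mres U_i}\big|_0(t,t) < -\alpha\, K^{W\mres U_i}(t,t) \quad \forall\, t \in \mathbb{R}\setminus\{0\}.
\]
Extending each $F^i$ by the identity outside $U_i$ yields a $1$-parameter family of diffeomorphisms of $U$ with support in the pairwise disjoint sets $U_i'$.

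The next step is to assemble these into a $p$-parameter family. Because the $F^i$ have mutually disjoint supports, they pairwise commute, so I can define
\[
F : \overline{B}^p(0,1) \longrightarrow \mathrm{Diff}(U), \qquad F_v = F^1_{v_1} \circ F^2_{v_2} \circ \cdots \circ F^p_{v_p}.
\]
Then $F_0 = \mathrm{Id}$, $F_{-v} = F_v^{-1}$ (by commutativity), and $F_v$ is supported in $\bigcup_i U_i' \subset\subset U$, so $F$ is a valid $p$-parameter family in the sense of the definition. Moreover, the associated vector fields $Y_i = \frac{d}{dt}\big|_0 F_{t e_i}$ coincide with the vector fields of the individual families $F^i$, hence they have pairwise disjoint supports contained in the $U_i$.

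The key computation exploits this disjointness. Since on $U_i$ all factors except $F^i_{v_i}$ act as the identity, and since $F^i_{v_i}$ preserves $U_i$,
\[
A^W(v) = \|(F_v)_{\#} W\|(U) = \sum_{i=1}^p A^{W\mres U_i}(v_i) + \|W\|\!\left(U \setminus \bigcup_i U_i\right),
\]
so $D^2 A^W|_0$ is block-diagonal in the coordinates $(v_1,\dots,v_p)$, giving
\[
D^2 A^W|_0(u,u) = \sum_{i=1}^p D^2 A^{W \mres U_i}|_0(u_i,u_i).
\]
Similarly, disjoint support of the $Y_i$ with respect to $\|W\|$ yields
\[
K^W(u,u) = \Big\|\sum_i u_i Y_i\Big\|^2_{L^2(U,\|W\|)} = \sum_{i=1}^p K^{W\mres U_i}(u_i,u_i).
\]
For any $u \in \mathbb{R}^p \setminus \{0\}$, at least one $u_i \neq 0$; for that index the strict inequality from $\lambda_1(W\mres U_i) < -\alpha$ applies, while the remaining indices contribute $0 = 0$. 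Summing yields $D^2 A^W|_0(u,u) < -\alpha\, K^W(u,u)$ for all $u \neq 0$, so $\lambda_p(W) < -\alpha$, contradicting the hypothesis.

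The only point requiring real care is the step just described, namely that strict inequality survives under summation when some coordinates $u_i$ vanish; this is handled by noting that the contribution from an index with $u_i = 0$ is identically zero on both sides, so a single nonzero coordinate suffices to keep the overall inequality strict. Apart from this, the argument is a direct bookkeeping exercise enabled by the disjointness of the $U_i$ and the resulting commutation of the families $F^i$.
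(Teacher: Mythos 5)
Your proof is correct and takes essentially the same approach as the paper. The only cosmetic difference is that the paper assembles the $p$-parameter family as $F_v = \Phi^{\sum_i v_i Y^i}$ (time-one flow of the combined vector field) and then applies equation \eqref{Hess A and 2nd var} to reduce to $\delta^2 W$, whereas you compose the given one-parameter families $F^1_{v_1}\circ\cdots\circ F^p_{v_p}$ and decompose $A^W$ directly; since $W$ is stationary, $D^2A^W|_0$ depends only on the vector fields $Y^i$, so the two constructions produce the same Hessian and the arguments coincide.
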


\begin{proof}
	Let us assume that for all $i = 1,...,p$ we have $\lambda_1(W \mres U_i ) < -\alpha$. Hence, from Definition \ref{Def of jacobi eigenvalue} and equation \ref{Hess A and 2nd var} we have maps
	$$F^i : \overline{B}^1 \longrightarrow \text{Diff}(U_i)$$
	and vector-fields $Y^i$ compactly supported in $U_i$
	$$Y^i\big |_x = \frac{d}{dt}\bigg |_0 F^i_t(x)$$
	\st
	$$\delta^2W(Y^i, Y^i) < -\alpha \|Y^i\|_{L^2}^2$$
	We define 
	$$F : \overline{B}^p \longrightarrow \text{Diff}(U), \qquad F_v = \Phi^{\sum_iv_iY^i}$$
	where $\Phi^X$ denotes the time $1$ flow of the vector field $X$. For this choice of $F$ we have
	\begin{align*}
	D^2A^W\big |_0(u,u) &=\delta^2W\left( \sum_i u_iY^i, \sum_i u_iY^i\right)\\
	& = \sum_i u_i^2\delta^2W(Y^i, Y^i) \quad (\text{since spt}(Y^i) \text{'s are mutually disjoint})\\
	&< -\alpha \sum_i u_i^2\|Y^i\|_{L^2}^2 \\
	& = -\alpha \big \|\sum_i u_iY^i \big \|_{L^2}^2 \quad (\text{since spt}(Y^i) \text{'s are mutually disjoint})\\
	& = -\alpha K^W(u,u)	
	\end{align*}
This contradicts $\lambda_p(W)\geq -\alpha$.	
\end{proof}

Returning back to the proof of the main Theorem \ref{main theorem to prove}, let $G\subset M$ be the set of points $x \in M$ for
which there exists $r= r(x)>0$ and some subsequence $\{V_{k'}\} \subset \{V_k\}$ \st 
for each $k'$, $\lambda_1(V_{k'}\mres B^N(x, 2r))\geq -\alpha$. \tf using Theorem \ref{modification of SS thm-2}, $
\mathcal{H}^{s}\left(sing(M)\cap B^N(x, r(x))\right)=0$ for all $x \in G$ and $ 
s > n-7$.
\\
\begin{lemma}
	
	The set $M \setminus G$ has atmost $p-1$ points.
	\label{bad pt cardinality}
	
\end{lemma}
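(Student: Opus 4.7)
The plan is to argue by contradiction. Suppose $M \setminus G$ contains $p$ distinct points $x_1, \ldots, x_p$. The key tool is Lemma~\ref{lemma disjoint open sets} used in its contrapositive form: if $W$ is a stationary varifold, $U_1, \ldots, U_p$ are mutually disjoint open sets with $\|W\|(U_j) \neq 0$ for every $j$, and $\lambda_1(W \mres U_j) < -\alpha$ for every $j$, then $\lambda_p(W) < -\alpha$. The strategy is to apply this to $W = V_k$ for large $k$, with $U_j = B^N(x_j, 2r)$ for a sufficiently small $r>0$; combined with the hypothesis $\lambda_p(V_k) \geq -\alpha$, this will yield the desired contradiction.

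First I would carefully unpack the negation of $x \in G$. By definition, $x \in G$ means there exist $r > 0$ and a subsequence $\{V_{k'}\}$ with $\lambda_1(V_{k'} \mres B^N(x, 2r)) \geq -\alpha$ for every $k'$. Its negation reads: for every $r > 0$, the set of indices $k$ for which $\lambda_1(V_k \mres B^N(x, 2r)) \geq -\alpha$ is finite; equivalently, for every $r > 0$, $\lambda_1(V_k \mres B^N(x, 2r)) < -\alpha$ for all but finitely many $k$.

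Next I would choose $r > 0$ small enough that the balls $B^N(x_j, 2r)$ for $j = 1, \ldots, p$ are pairwise disjoint. Since there are only $p$ points and the exceptional-index set at each point is finite, there is a single $k_0$ so that for every $k \geq k_0$ and every $j \in \{1, \ldots, p\}$,
\[ \lambda_1(V_k \mres B^N(x_j, 2r)) < -\alpha. \]
To invoke Lemma~\ref{lemma disjoint open sets} I also want $\|V_k\|(B^N(x_j, 2r)) > 0$ for large $k$: since $x_j \in M = \textup{spt}(V)$, the monotonicity formula for the stationary varifold $V$ gives $\|V\|(B^N(x_j, 2r)) > 0$, and lower semicontinuity of mass on open sets under $\mathbf{F}$-convergence yields $\|V_k\|(B^N(x_j, 2r)) > 0$ for all sufficiently large $k$.

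With both hypotheses in place, the contrapositive of Lemma~\ref{lemma disjoint open sets} applied to $W = V_k$ with $U_j = B^N(x_j, 2r)$ produces $\lambda_p(V_k) < -\alpha$ for all large $k$, contradicting $\lambda_p(V_k) \geq -\alpha$. I do not anticipate a serious obstacle in executing this plan; the only delicate point is the correct bookkeeping of the quantifiers in the definition of $G$, ensuring a single index threshold $k_0$ that works simultaneously at all $p$ points, which is automatic from the finiteness of the set $\{x_1, \ldots, x_p\}$.
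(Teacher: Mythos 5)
Your proposal is correct and follows essentially the same argument as the paper: negate membership in $G$, pick a radius making the $p$ balls disjoint, absorb the finitely many exceptional indices into a single threshold $k_0$, and invoke Lemma \ref{lemma disjoint open sets} in contrapositive to contradict $\lambda_p(V_k)\geq-\alpha$. The only difference is that you explicitly verify the hypothesis $\|V_k\|(B^N(x_j,2r))\neq 0$, a small point the paper leaves implicit.
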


\begin{proof}
	
	Suppose there exists $p$ points $\{x_i\}_{i=1}^{p}\subset M \setminus G $. Let $t$ be small enough so that the normal geodesic balls $\{B^N(x_i, t)\}_{i=1}^{p}$ are mutually disjoint. By the definition of $G$, there exists $k_0$ \st for all $k\geq k_0$, $\lambda_1(V_k \mres B^N(x_i, t))<-\alpha$ for each $i$. By Lemma \ref{lemma disjoint open sets}, this implies $\lambda_p(V_k)<-\alpha$ for all $k \geq k_0$, a contradiction.
\end{proof}

We note that
\begin{equation}
sing(M)\cap G = \bigcup_{x\in sing(M)\cap G} \left(sing(M)\cap B^N(x, r(x))\right)
\label{union of good balls }
\end{equation}

We can extract a countable subcover of the R.H.S. of \ref{union of good balls } and write\footnote{This is possible because $N$, being a manifold, is second-countable and a subspace of a second-countable space is second-countable.} 

\begin{equation}
sing(M)\cap G = \bigcup_{i=1}^{\infty} \left(sing(M)\cap B^N(x_i, r(x_i))\right)
\label{countable union of good balls}
\end{equation}

where each $x_i$ is in sing$(M) \cap G$. By the definition of $G$, for $s >n-7$, $\mathcal{H}^{s}(sing(M)\cap B^N(x_i, r(x_i)))=0$ for each $i$. \tf \ref{countable union of good balls} and Lemma \ref{bad pt cardinality} imply that $\mathcal{H}^{s}(sing(M))=0$. (Here we are implicitly assuming that $n\geq7$ so that for $s>n-7 \geq 0$, $\mathcal{H}^s(M\setminus G)=0$; when $n<7$ further arguments are required to show that $M$ is smooth at the points of $M \setminus G$ as explained in \cite{ACS}.)
\\\\

We can now complete the proof of graphical convergence. We will produce a set $\mathcal{Y} \subset reg(M)$ and a subsequence $\{M_{k'}\} \subset \{M_k\}$ \st $M_{k'}$ converges smoothly and graphically on compact subsets of $reg(M)\setminus \mathcal{Y}$. Let $X$ be a countable, dense subset of $reg(M)$ and
$$\mathcal{B}=\left \lbrace B^N(x,r): x\in X, r \in \mathbb{Q}^{+}, r<d^N(x, sing(M)), r< \text{inj}_N\right \rbrace$$

Then $\mathcal{B}$ is a countable collection of balls, say, $\mathcal{B} = \{B_i\}_{i=1}^{\infty}$. We will mark each $B_i$ as good or bad and to each $B_i$ we will assign an infinite index set $I_i \subset \bbn $ as follows. At the first step we examine whether there exists an infinite set $J \subset \bbn$ \st $\{M_j\}_{j \in J} $ converges to $M$ smoothly in $B_1$. If it exists we mark $B_1$ as good and define $I_1$ to be that $J$. Otherwise we mark $B_1$ as bad and define $I_1$ to be $\bbn$. Suppose we have marked $B_{i-1}$ as good or bad and defined $I_{i-1}$. Then we examine whether there exists an infinite set $J \subset I_{i-1}$ \st $\{M_j\}_{j \in J} $ converges to $M$ smoothly in $B_i$. If it exists we mark $B_i$ as good and define $I_i$ to be that $J$. Otherwise we mark $B_i$ as bad and define $I_i$ to be $I_{i-1}$. \\\\
Let $\mathcal{G}$ be the union of good balls. Denoting $\mathcal{Y} = reg(M) \setminus \mathcal{G}$, we claim that $|\mathcal{Y}|\leq p-1$. Otherwise, \te $p$ distinct points $x_1,...,x_p$ in $\mathcal{Y}$. To arrive at a contradiction we will apply Lemma \ref{modified SS smooth convergence} to the sequence of singular, minimal hypersurfaces $\{M_k\} $; $|M_k|$ converges in the varifold sense to $V$ which is supported on $M$; Lemma \ref{modified SS smooth convergence} provides a function $R : reg(M) \longrightarrow \bbr^+$.  Let $\tau$ be a positive number \st $5\tau \leq R(x_l)$ for each $l$ and the balls
 $\{B^N(x_l, 5\tau)\}_{l=1}^p$ are mutually disjoint. There exists
  $B_{i_l} \in \mathcal{B}$ \st $x_l \in B_{i_l} \subset B^N(x_l, \tau)$.
  As $x_l \in \mathcal{Y}$, $B_{i_l}$ is a bad ball. Hence $\{M_j\}_{j \in I_{i_l}}$ does not have a subsequence which smoothly converges to $M$
  in $B^N(x_l, \tau)$. \tf by Lemma \ref{modified SS smooth convergence}, $\lambda_1(V_j \mres B^N(x_l , 5\tau)) < -\alpha$ for all large $j \in I_{i_l}$. Without loss of generality, we can assume that
 $i_1 <...<i_p$ so that $I_{i_1} \supset ... \supset I_{i_p}$. Hence $\lambda_1(V_j \mres B^N(x_l , 5\tau)) < -\alpha$ for each $l=1,..,p$ and for all large $j \in I_{i_p}$. By Lemma \ref{lemma disjoint open sets} this gives $\lambda_p(V_{j}) < -\alpha$ for all large $j \in I_{i_p}$, a contradiction. Hence, $|\mathcal{Y}| \leq p-1$.
\\\\
By a diagonal argument, we can choose an infinite set $I \subset \mathbb{N}$ \st $|I \setminus I_i|$ is finite for all $i$. Then, by the definition of good ball, $\{M_i\}_{i \in I}$ is a sequence which converges smoothly and graphically on the compact subsets of $reg(M) \cap \mathcal{G} =reg(M)\setminus \mathcal{Y}$. 

\section{Some further remarks}
Besides the main compactness Theorems of \cite{Sharp} and \cite{ACS}, some additional results proved in these two papers can be suitably generalized in higher dimensions. In this last section, we will state them as a sequence of remarks. Below we will assume that $M_k$'s and $M$ are as in Theorem \ref{main theorem to prove} (and Theorem \ref{sharp}).
\medskip
\begin{remark}
	We have assumed that each $M_k$ is connected. Since $\{M_k\}$ converges to $M$ in the Hausdorff distance, this implies that $M$ is connected as well. From \cite{Ilmanen} (Theorem A (ii)) it follows that $reg(M)$ and hence $reg(M) \setminus \mathcal{Y}$ is also connected. Therefore, the number of sheets in the graphical convergence is constant over $reg(M) \setminus \mathcal{Y}$. In particular, $V= m |M|$ for some $m \in \bbn.$
\end{remark}
\bigskip
\begin{remark}
	If the number of sheets in the graphical convergence is $1$, then $\mathcal{Y} = \emptyset$. This is Claim $4$ in \cite{Sharp} and the proof presented there works in our case as well.
\end{remark}
\bigskip
\begin{remark}
	Suppose $reg(M)$ is two sided. If the number of sheets in the graphical convergence is at least $2$ or if the number of sheets is $1$ and $M_k \cap M = \emptyset$ for large $k$, we can construct a positive Jacobi field on $reg(M)$. In this case, $reg(M)$ and hence $M$ is stable. The proof is same as presented in \cite{Sharp}.
	\label{jacobi field}
\end{remark}
\bigskip
\begin{remark}
	Continuing with Remark \ref{jacobi field}, suppose $Ric(N,g)>0$. Then the convergence of $M_k$ to $M$ is always single sheeted. This can be thought of as a higher dimensional analogue of Choi-Schoen \cite{CS} which asserts that in a three \mf with possitive Ricci curvature, the space of closed, embedded minimal surfaces with bounded genus is compact in the smooth topology.  Indeed, in our case, if $H_n(N, \mathbb{Z}_2)=0$ then $reg(M)$ is  two sided; hence, if the number of sheets is $\geq 2$, $M$ is stable by the above Remark \ref{jacobi field}. However, as proved in \cite{Zhou} (Lemma 2.8) positive Ricci curvature of $(N,g)$ implies that $M$ can not be stable. The general case can be obtained by lifting $M_k$'s and $M$ to the universal cover $\tilde{N}$ of $N$ (by \cite{F} and Lemma 2.10 of \cite{Zhou} the lifts $\tilde{M}_k , \tilde{M}$ are connected).
\end{remark}
\bigskip
\begin{remark}
	Theorems \ref{main theorem to prove} and \ref{sharp} hold in the varying \mt set-up. More precisely, instead of assuming $|M_k|$ is stationary with respect to the fixed \mt $g$ if we assume that $|M_k|$ is stationary with respect to the \mt $g_k$ and $g_k$ converges to $g$ in $C^3$, Theorems \ref{main theorem to prove} and \ref{sharp} continue to hold.
\end{remark}
\nocite{*}
\bigskip

\bibliographystyle{amsalpha}
\bibliography{higherdimcptness}

\end{document}